\documentclass[final,onefignum,onetabnum]{siamart171218}

\usepackage{amsfonts} 
\usepackage{graphicx}
\usepackage{epstopdf}
\usepackage{ulem}
\usepackage{enumitem}
\usepackage{amsmath,amssymb}

\def\UT{\widehat{A}^{\,T}}
\def\Re{\mathrm{Re}}
\def\Im{\mathrm{Im}}

\newcommand{\R}{\mathbb{R}}
\def\Pobl{P_{\,\mathcal{R}(BA),\,\mathcal{N}(BA)}}
\def\Poblb{P_{\,\mathcal{R}(B),\,\mathcal{N}(BA)}}

\newcommand{\BA}{\textsc{BA Iteration}}
\newcommand{\SBA}{\textsc{Shifted BA Iteration}}
\def\NR{[ \, N \ \, R \, ]}

\headers{Fixing Nonconvergence}{Dong, Hansen, Hochstenbach, and Riis}

\title{Fixing Nonconvergence of Algebraic Iterative Reconstruction with
  an Unmatched Backprojector\thanks{Submitted to the editors DATE.
  \funding{This work has been partially funded by Advanced grant 291405
  from the European Research Council.
  The third author has been supported by an NWO Vidi research grant.}}}

\author{Yiqiu Dong\thanks{Department of Applied Mathematics and Computer Science,
  Technical University of Denmark,
  DK-2800 Kgs. Lyngby, Denmark (\email{yido@dtu.dk},
  \url{http://www.compute.dtu.dk/\string~yido/},
  \email{pcha@dtu.dk}, \url{http://www.compute.dtu.dk/\string~pcha/},
  \email{nabr@dtu.dk}, \url{http://www.compute.dtu.dk/\string~nabr/}).}
\and Per~Christian~Hansen\footnotemark[2]
\and Michiel~E.~Hochstenbach\thanks{Department of Mathematics and Computer Science,
  TU Eindhoven, 5600 MB Eindhoven, The Netherlands
  (\url{http://www.win.tue.nl/\string~hochsten/})}
\and Nicolai Andr\'e Brogaard Riis\footnotemark[2]}

\begin{document}

\maketitle

\begin{abstract}
We consider algebraic iterative reconstruction methods with applications
in image reconstruction.
In particular, we are concerned with methods based on an unmatched
projector/backprojector pair; i.e., the backprojector is not the exact adjoint
or transpose of the forward projector.
Such situations are common in large-scale computed tomography, and we
consider the common situation where the
method does not converge due to the nonsymmetry of the iteration matrix.
We propose a modified algorithm that incorporates a small shift
parameter, and we give the conditions that guarantee convergence of this method
to a fixed point of a slightly perturbed problem.
We also give perturbation bounds for this fixed point.
Moreover, we discuss how to use Krylov subspace methods to
efficiently estimate the leftmost eigenvalue of a certain matrix
to select a proper shift parameter.
The modified algorithm is illustrated with test problems from
computed tomography.
\end{abstract}

\begin{keywords}
  Unmatched transpose, algebraic iterative reconstruction, perturbation theory,
  leftmost eigenvalue estimation, computed tomography
\end{keywords}

\begin{AMS}
65F10, 65F15, 65F22, 15A18, 15A60
\end{AMS}

\section{Introduction}

Algebraic iterative reconstruction techniques \cite{AIRtoolsII}\,---\,such
as the methods by Kaczmarz and Cimmino\,---\,play an important role in
solving inverse problems.
In particular, they are popular in computed tomography (CT) due to their great
flexibility with respect to the measurement geometry of the X-ray scanner and
their ability to handle very underdetermined problems.
This is in contrast to filtered backprojection and similar algorithms
\cite{Natterer} that rely on specific geometries and a large amount of data.
Algebraic iterative methods are also used successfully in other
image reconstruction problems such as image deblurring.

The fundamental mechanism behind these methods is known as semi-convergence
\cite{Natterer}.
During the initial iterations, the iterates approach the exact (and unattainable)
solution to the noise-free problem, while in later stages the iterates
converge to the undesired noisy solution.
The methods produce filtered, or regularized, solutions and the number of
iterations plays the role of a regularization parameter \cite{HansenDIP}.

The algebraic iterative methods, in their basic form as well as their
block versions, lend themselves very well to implementations that utilize
GPUs and other hardware accelerators, and where the coefficient matrix
is never stored; rather, the matrix-vector multiplications are computed on the fly.
This has led to an implementation paradigm that is routinely used in
software packages for computed tomography (such as ASTRA~\cite{ASTRA}
and TIGRE~\cite{TIGRE}), namely,
to focus on the computational speed of the matrix-vector multiplication.
However, this introduces a convergence issue that has largely been ignored.

To set the stage, we formulate the noise-free problem as
	\begin{equation}
	  \label{eq:Axb}
	  A\, \bar{x} = \bar{b} \ , \qquad A \in \R^{m\times n} \ ,
	\end{equation}
where the vectors $\bar{x}$ and $\bar{b}$ represent the exact image and the
noise-free data, while $A$
represents the forward model\,---\,known as the (forward) projection in CT
where both $m \ge n$ and $m < n$ are common.
The multiplication with $A^T$, the transpose of $A$, is known in CT as the
backprojection.
These two operations form the computational core of any algebraic iterative method
and therefore\,---\,to optimize for computational speed\,---\,the software developers
often choose different discretization schemes, and different model approximations,
for these operations \cite{vBS09}.
Consequently, in such software the backprojection corresponds to multiplication
with a matrix $\UT$ that is not the exact transpose of $A$.
We refer to this situation as having an \textsl{unmatched projector/backprojector pair},
and we call $\UT$ the \textsl{unmatched transpose}.

It appears that very little attention has been given to iterations based on
such unmatched pairs;
see \cite{ZG} for an early reference and \cite{ElHa18}, \cite{LRS18}
for two more recent ones.
The latter paper has introduced a methodology for analyzing such iterations and
formulated conditions for convergence, and it has been shown that an
unmatched projector/backprojector pair deteriorates the best possible solution
at the point of semi-convergence.

In the common situation of an unmatched projector/backprojector pair
where the convergence criterion from \cite{ElHa18} is not satisfied,
the iterations will fail to converge for noise-free data (although some kind of
semi-convergence may be observed experimentally for noisy data).
In this paper we show that a small and cost-efficient modification of the basic
algorithm can guarantee convergence to a solution of a slightly perturbed problem.
This ensures that the fast implementations of the forward projections
and backprojections can still be used without sacrificing convergence.
Moreover, to provide a theoretical foundation we
extend the convergence and perturbation analysis from \cite{ElHa18}
to the modified algorithm.

Our paper has been organized as follows.
Section 2 sets the stage by summarizing convergence results for a generic
iterative algorithm (proposed in \cite{ElHa18}) that allows an unmatched transpose.
Section 3 introduces the modified algorithm, gives the associated
convergence conditions, and discusses the perturbation theory for the
underlying problem.
The modified algorithm is based on the introduction of a shift parameter,
and in Section~\ref{sec:eigenvalues} we discuss how to efficiently estimate the leftmost
eigenvalue of a certain matrix, which defines this shift.
Finally, in Section~\ref{sec:numerical} we give numerical examples that illustrate the
new method for solving inverse problems, followed by some conclusions in
Section~\ref{sec:conclusions}.

We use the following notations:
$\| \cdot \|$ denotes the vector and matrix 2-norm,
$\mathcal{R}(A)$ and $\mathcal{N}(A)$ are the range and null space of $A$,
respectively, and we split a complex eigenvalue $\lambda_j$
into its real and imaginary parts denoted by $\Re(\lambda_j)$ and $\Im(\lambda_j)$,
respectively. For a vector $x$, we use $x^H$ for its conjugate transpose.

In Section \ref{sec:BAit} we
use notations and concepts associated with oblique projections
and oblique pseudoinverses
to obtain compact expression that would otherwise
be quite lengthy.  We refer to
\cite{Hans13} for details and geometric
interpretations of these quantities in relation to inverse problems.
Given two complementary subspaces $\mathcal{X}$ and $\mathcal{Y}$ of
$\mathbb{R}^m$ that intersect trivially,
and matrices $X$, $Y$, and $Y_0$ such that
  \[
    \mathcal{X} = \mathcal{R}(X) \ , \qquad
    \mathcal{Y} = \mathcal{R}(Y) \ , \qquad
    \mathcal{Y}^{\perp} = \mathcal{R}(Y_0) \ .
  \]
Then the $m \times m$ matrix $P_{\mathcal{X},\mathcal{Y}}$ denotes the
oblique projector onto $\mathcal{X}$ along $\mathcal{Y}$ which satisfies
  \[
    \forall x \in \mathcal{X} : P_{\mathcal{X},\mathcal{Y}} \, x = x \ , \qquad
    \forall y \in \mathcal{Y} : P_{\mathcal{X},\mathcal{Y}} \, y = 0 \ , \qquad
    \forall z \in \mathbb{R}^m : P_{\mathcal{X},\mathcal{Y}} \, z \in \mathcal{X} \ ,
  \]
and the projection matrix can be written as
  \begin{equation}
  \label{eq:oblproj}
    P_{\mathcal{X},\mathcal{Y}} = X \bigl( Y_0^T X \bigr)^{\!\dagger} \, Y_0^T \ ,
  \end{equation}
where $^\dagger$ denotes the Moore--Penrose pseudoinverse.
Moreover, if $X\in\mathbb{R}^{m \times n}$ then
the $n\times m$ matrix $X_{\mathcal{Y}}^{\dagger}$ denotes the oblique
pseudoinverse of $X$ along $\mathcal{Y}$, given by
  \begin{equation}
  \label{eq:oblpinv}
    X_{\mathcal{Y}}^{\dagger} = \left\{ \begin{array}{ll}
      X^{\dagger}\, P_{\mathcal{X},\mathcal{Y}}
         \ = \ \bigl( Y_0^T X \bigr)^{\!\dagger}\,  Y_0^T \ , & m \geq n \\[2mm]
      P_{\mathcal{Y},\mathcal{N}(X)} \, X^{\dagger}
       \ = \ Y\, (X\, Y)^{\dagger} \ , & m \ \leq \ n \ .
    \end{array} \right.
  \end{equation}

The case $m\geq n$ requires $\mathcal{R}(X)$ and $\mathcal{N}(Y)$ to be complementary,
while the case $m \leq n$ requires $\mathcal{N}(X)$ and $\mathcal{R}(Y)$ to be complementary.
If $\mathcal{Y} = \mathcal{X}^{\perp}$ then $P_{\mathcal{X},\mathcal{Y}}$
is the orthogonal projector on $\mathcal{X}$ and $X_{\mathcal{Y}}^{\dagger}$
is the ordinary pseudoinverse.

\section{The BA Iteration}
\label{sec:BAit}

When we consider noisy data $b = \bar{b} + e$, where the vector $e$ represents
the perturbation, then it is common to compute a (weighted) least squares solution.
In the simplest case with unit weights we can compute the solution
by means of the Landweber iteration (or gradient descent method)
with initial guess $x^0 = 0$:
  \begin{equation}
  \label{eq:Landweber}
    x^{k+1} = x^k + \omega \, A^T \, (b - A \, x^k) \ , \qquad
    k = 0,1,2,\ldots \ ,
  \end{equation}
where $\omega$ is a relaxation parameter satisfying
$0 < \omega < 2\,/\,\| A^T A \|$.

To analyze the behavior of this and similar algebraic iterative
methods with an unmatched transpose, we follow \cite{ElHa18} and
consider the \BA\ defined by
  \begin{equation}
  \label{eq:BAIteration}
    x^{k+1} = x^k + \omega \, B\, (b - A\, x^k) \ ,\qquad
    k = 0,1,2,\ldots \ ,
  \end{equation}
where different choices of the $n \times m$ matrix $B$ give unmatched-transpose versions of
known iterative methods.
For example, $B$ can be an unmatched transpose $\UT$ for
Landweber's method, or $B$ can be an unmatched approximation to
$A^T \mathrm{diag}(A\, A^T)^{-1}$ for Cimmino's method;
see \cite{AIRtoolsII} for an overview of methods.

The convergence of the \BA\ is governed by the (complex) eigenvalues
$\lambda_j$ of the matrix $BA$: \eqref{eq:BAIteration} converges if and only if
the relaxation parameter $\omega$ and all nonzero $\lambda_j$ satisfy
  \begin{equation}
  \label{eq:BAcc}
    0 < \omega < \frac{2\, \Re(\lambda_j)}{| \lambda_j |^2} \qquad
    \hbox{and} \qquad \Re(\lambda_j) > 0 \ ,
  \end{equation}
see \cite[Prop.~3.2]{ElHa18} for details.
Note that this specializes to the standard condition when $B=A^T$.

We will now investigate when the \BA\, \eqref{eq:BAIteration} has a unique fixed point.
From the definition of the \BA\ \eqref{eq:BAIteration}
with $x^0=0$ it follows that any fixed point $x^*$ must satisfy $B\, A\, x^* = B\, b$.
Moreover, it is also clear from \eqref{eq:BAIteration} that all iterates
$x^k \in \mathcal{R}(B)$; in particular this holds for $x^*$.
Therefore, we can write any fixed point in the form $x^* = B\, y$ for some vector $y \in \R^m$.
(This vector $y$ may not be unique, as one can add an arbitrary component
$z \in \mathcal{N}(B)$, but this is irrelevant for what is to follow.)
Inserting $x^* = B\, y$ we obtain an equation for $y$:
  \begin{equation}
  \label{eq:BABA}
    B\, A\, B\ y = B\, b \ .
  \end{equation}
Here, $B\,A$ is an operator from $\mathcal{R}(B)$ to itself.
Recall that $A \in \R^{m \times n}$ and $B \in \R^{n \times m}$, where
both $m \ge n$ and $m < n$ are possible in CT applications.

From \eqref{eq:BABA} it follows there is a unique fixed point $x \in \mathcal{R}(B)$
if and only if one of the following eight equivalent conditions holds.

\begin{proposition}
\label{prop:nonsing}
Consider the two matrices $A\in\mathbb{R}^{m\times n}$ and $B\in\mathbb{R}^{n\times m}$
with ranks $r_A, r_B \le \min \{m, n\}$ and with the singular value decompositions (SVDs)
$A = U_A \Sigma_A V_A^T$ and $B^T = U_B \Sigma_B V_B^T$.
The following statements are equivalent:
\begin{enumerate}[label=(\roman*)]
\item \label{first}
       $BA: \mathcal{R}(B) \to \mathcal{R}(B)$ is nonsingular
(meaning that $BA\, z = 0$ and $z \in \mathcal{R}(B)$ imply that $z=0$);
\item For every $b \in \R^m$, the equation $BAx = Bb$ has a
      unique solution $x \in \mathcal{R}(B)$;
\item $\mathcal{R}(B) \cap \mathcal{N}(BA) = \{0\}$;
\item \label{nullspaces}
      $\mathcal{N}(BAB) = \mathcal{N}(B)$;
\item \label{ranges}
      $\mathcal{R}(BAB) = \mathcal{R}(B)$;
\item \label{ranks}
      $\mathrm{rank}(BAB) = \mathrm{rank}(B)$;
\item \label{nonsingular}
      $A$ is nonsingular on $\mathcal{R}(B)$ and $B$ is nonsingular on $\mathcal{R}(AB)$;
\item \label{joint}
       $\mathcal{R}(B) \cap \mathcal{N}(A) = \{0\}$
      and $\mathcal{R}(AB) \cap \mathcal{N}(B) = \{0\}$;
\end{enumerate}
\end{proposition}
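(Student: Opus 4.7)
The plan is to establish the eight equivalences not by a single cycle but by a small web of implications, since most of the statements are reformulations of the same kernel/range condition, while the link to (vii) and (viii) genuinely requires a double pass through the two factors $A$ and $B$.

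First I would dispense with the easy direct equivalences. Conditions (i) and (iii) are literally the same by the definition of nonsingularity on a subspace. The equivalence of (i) and (ii) follows from the observation that $BA$ maps $\mathcal{R}(B)$ into itself (since $BA\,\mathcal{R}(B) = \mathcal{R}(BAB)\subseteq\mathcal{R}(B)$), so $BA|_{\mathcal{R}(B)}$ is a linear operator on the finite-dimensional space $\mathcal{R}(B)$; injectivity is then equivalent to bijectivity, and as $b$ ranges over $\R^m$ the right-hand side $Bb$ ranges over all of $\mathcal{R}(B)$, so unique solvability in (ii) is exactly this bijectivity. For (iv), note that $\mathcal{N}(B)\subseteq\mathcal{N}(BAB)$ is automatic, so (iv) amounts to the converse inclusion $BABy=0\Rightarrow By=0$, which is just (i) applied to $z=By\in\mathcal{R}(B)$. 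For (v), since $\mathcal{R}(BAB)\subseteq\mathcal{R}(B)$ always, (v) is surjectivity of $BA|_{\mathcal{R}(B)}$, hence again equivalent to (i). Finally (iv), (v) and (vi) all coincide by rank--nullity applied to the $m$-column matrices $B$ and $BAB$ together with the inclusions just noted.

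The nontrivial part is the link between (vii)/(viii) and the rest; observe first that (vii) is a pointwise restatement of (viii), so it suffices to relate (i) to (viii). For (i)$\Rightarrow$(viii), if $z\in\mathcal{R}(B)\cap\mathcal{N}(A)$ then $BAz=0$ with $z\in\mathcal{R}(B)$ forces $z=0$ by (i); and if $w=ABu\in\mathcal{N}(B)$ then $BABu=0$, so applying (i) to $Bu\in\mathcal{R}(B)$ gives $Bu=0$, whence $w=A(Bu)=0$. Conversely, for (viii)$\Rightarrow$(i), take $z\in\mathcal{R}(B)$ with $BAz=0$ and write $z=Bu$; then $ABu\in\mathcal{R}(AB)\cap\mathcal{N}(B)=\{0\}$, so $Bu\in\mathcal{R}(B)\cap\mathcal{N}(A)=\{0\}$, i.e., $z=0$.

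The main obstacle, such as it is, is being careful about the domain of the restricted map $BA|_{\mathcal{R}(B)}$: the matrices are nonsquare and $\mathcal{R}(B)$ may be a proper subspace of $\R^n$, so injectivity and surjectivity coincide only because one restricts to this specific finite-dimensional space, and likewise rank--nullity can only swap the nullspace equality (iv) with the range equality (v) after one records $\mathcal{N}(B)\subseteq\mathcal{N}(BAB)$ and $\mathcal{R}(BAB)\subseteq\mathcal{R}(B)$. The SVDs of $A$ and $B^T$ mentioned in the statement play no role in the equivalences themselves; they are presumably introduced here only to fix notation for later sections.
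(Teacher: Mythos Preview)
Your proof is correct and follows essentially the same line as the paper's own argument, which is very brief: it too relies on the automatic inclusions $\mathcal{N}(B)\subseteq\mathcal{N}(BAB)$ and $\mathcal{R}(BAB)\subseteq\mathcal{R}(B)$, rank--nullity, and the ``consecutive action of $BA$'' on $\mathcal{R}(B)$ to handle (vii)/(viii). In fact your write-up is considerably more explicit than the paper's sketch, particularly in the two-directional argument linking (i) and (viii); and your remark that the SVDs introduced in the statement are not actually used in the proof is accurate.
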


\begin{proof}
The equivalences follow relatively straightforwardly from the (dimensions of)
nullspaces and ranges of $A$, $B$, $BA$, and $AB$.
For \ref{nullspaces} we have $\mathcal{N}(BAB) \supseteq \mathcal{N}(B)$ with equality
if and only if \ref{first} holds.
For \ref{ranges} one has $\mathcal{R}(BAB) \subseteq \mathcal{R}(B)$ with equality
if and only if \ref{first} holds,
where the ranks in \ref{ranks} are the dimensions of the subspaces of \ref{ranges}.
A nonzero vector in $\mathcal{R}(B)$ cannot be mapped to zero by the
consecutive action of $BA$, which is stated in \ref{nonsingular} and \ref{joint}.
\end{proof}


We assume from now on that there is a unique solution,
and the next theorem provides specific expressions for this fixed point.
We will see that, even in the absence of noise, the fixed point of \eqref{eq:BAIteration}
is \textit{not} the exact solution $\bar{x}$.
One way to understand this\,---\,following the discussion in \cite{ElHa18}\,---\,is
by the fact that
the unmatched normal equations $BA\, x = B\, b$ may be viewed
as an oblique projection of $A\, x=b$, instead of the common orthogonal
projection underlying the normal equations $A^TA\, x = A^T\, b$.

\begin{theorem}
\label{prop:fixpoint}
Assume that $A$ and $B$ satisfy the criteria in Proposition~\ref{prop:nonsing}.
Then the fixed point $x^*$ of the \BA\ \eqref{eq:BAIteration}
with starting vector $x^0 = 0$ can be expressed in three ways:
  \begin{equation}
  \label{eq:fixpointnoisy}
    x^* = (B\, A)_{\mathcal{R}(B)}^{\dagger} B\, b =
    B\, (A\, B)_{\mathcal{N}(B)}^{\dagger}\, b =
    \Poblb\, A_{\mathcal{N}(B)}^{\dagger} b \ ,
  \end{equation}
and for noise-free data $\bar{b} = A\, \bar{x}$ the fixed point is given by
  \begin{equation}
  \label{eq:fixpoint}
    \bar{x}^* = \Pobl \, \bar{x} \ .
  \end{equation}
If $m \geq n$ and $A$ and $B$ have full rank then $x^* = (B\, A)^{-1} B\, b$
and $\bar{x}^* = \bar{x}$.
\end{theorem}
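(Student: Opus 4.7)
The plan is to exploit the characterization from the discussion preceding the theorem: under the hypotheses of Proposition~\ref{prop:nonsing}, the fixed point $x^*$ is the \emph{unique} vector in $\mathcal{R}(B)$ satisfying $BA\,x^* = B\,b$. Each of the three expressions in \eqref{eq:fixpointnoisy} can then be validated by checking these two properties, and the noise-free formula follows by substitution.

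For the first form I would write $x^* = B\,y$ and reduce the fixed-point equation to $BAB\,y = Bb$. Because $\mathcal{N}(BAB) = \mathcal{N}(B)$ by Proposition~\ref{prop:nonsing}\,\ref{nullspaces}, the choice $y = (BAB)^{\dagger}Bb$ is a valid preimage, and the outer multiplication by $B$ kills the $\mathcal{N}(B)$-ambiguity, giving $x^* = B(BAB)^{\dagger}Bb$. Applying the second equality of \eqref{eq:oblpinv} with $X = BA$ and $Y = B$ (whose columns span $\mathcal{R}(B)$) identifies this with $(BA)^{\dagger}_{\mathcal{R}(B)}\,Bb$.

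For the second and third forms I would verify them directly. A useful preliminary is the identity $X\,X^{\dagger}_{\mathcal{Y}} = P_{\mathcal{R}(X),\mathcal{Y}}$, which follows from \eqref{eq:oblpinv}: $XX^{\dagger}$ is the orthogonal projector onto $\mathcal{R}(X)$, a subspace on which $P_{\mathcal{R}(X),\mathcal{Y}}$ acts as the identity. For the candidate $B(AB)^{\dagger}_{\mathcal{N}(B)}\,b$, membership in $\mathcal{R}(B)$ is immediate, and
\[
BA\cdot B(AB)^{\dagger}_{\mathcal{N}(B)}b \;=\; B\,\bigl[AB\,(AB)^{\dagger}_{\mathcal{N}(B)}\bigr]b \;=\; B\,P_{\mathcal{R}(AB),\mathcal{N}(B)}\,b \;=\; Bb,
\]
the last step because $b - P_{\mathcal{R}(AB),\mathcal{N}(B)}b \in \mathcal{N}(B)$ by the very definition of the oblique projector. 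The third form is analogous: $\Poblb\,A^{\dagger}_{\mathcal{N}(B)}\,b$ lies in $\mathcal{R}(B)$ by construction, and the identity $BA\,\Poblb = BA$ (since $I - \Poblb$ is a projector onto $\mathcal{N}(BA)$) reduces the computation to $BA\,A^{\dagger}_{\mathcal{N}(B)}b = B\,P_{\mathcal{R}(A),\mathcal{N}(B)}b = Bb$.

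Finally, for noise-free data $\bar{b} = A\bar{x}$ I would verify that $\Pobl\,\bar{x}$ satisfies the two characterizing conditions: it lies in $\mathcal{R}(BA) = \mathcal{R}(B)$ (equality because $\mathrm{rank}(BA) = \mathrm{rank}(B)$, obtained by squeezing $\mathrm{rank}(BAB) \le \mathrm{rank}(BA) \le \mathrm{rank}(B)$ together with Proposition~\ref{prop:nonsing}\,\ref{ranks}), and decomposing $\bar{x}$ along $\mathcal{R}(BA) \oplus \mathcal{N}(BA)$ yields $BA\,\Pobl\bar{x} = BA\,\bar{x} = B\,\bar{b}$. Uniqueness then forces $\bar{x}^* = \Pobl\,\bar{x}$. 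The full-rank case with $m \ge n$ is immediate: $BA$ is square and invertible, $\mathcal{R}(B) = \mathbb{R}^n$, and every oblique construction collapses to the ordinary inverse. The main obstacle I expect is the bookkeeping needed to guarantee that each oblique projector and pseudoinverse in \eqref{eq:fixpointnoisy} is well-defined, which amounts to translating Proposition~\ref{prop:nonsing} into complementarity statements such as $\mathcal{R}(AB) \oplus \mathcal{N}(B) = \mathbb{R}^m$ via the rank identities it provides; the algebraic manipulations themselves are routine.
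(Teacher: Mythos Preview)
Your proposal is correct and reaches the same conclusions, but the route differs from the paper's in a way worth noting. For the first expression you and the paper proceed identically, writing $x^*=B(BAB)^\dagger Bb$ and invoking \eqref{eq:oblpinv}. For the remaining expressions, however, the paper is \emph{constructive}: it obtains the second form by regrouping $B(BAB)^\dagger B$ as $(B(AB))^\dagger B$, and it derives the third form by introducing the SVD $BA=U\Sigma V^T$, solving explicitly for the minimum-norm $y$, and then recognizing $B(V^TB)^\dagger V^T$ and $(BA)^\dagger B$ as the relevant oblique projector and pseudoinverse. Similarly, for the noise-free case the paper writes $\bar{x}^*=BA(BABA)^\dagger BA\,\bar{x}$ and identifies this product as $\Pobl$. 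Your approach instead exploits \emph{uniqueness}: once $x^*$ is characterized as the unique element of $\mathcal{R}(B)$ solving $BAx=Bb$, you simply check that each candidate satisfies both conditions. This is cleaner and avoids the SVD entirely; it also makes transparent exactly which consequences of Proposition~\ref{prop:nonsing} are used at each step (e.g.\ $\mathcal{R}(BA)=\mathcal{R}(B)$ from \ref{ranks}). The paper's derivation, by contrast, shows \emph{how} one might discover these formulas rather than merely confirm them. One small caution: your step $B\,P_{\mathcal{R}(A),\mathcal{N}(B)}\,b=Bb$ presumes $\mathcal{R}(A)\oplus\mathcal{N}(B)=\mathbb{R}^m$, which is not guaranteed by Proposition~\ref{prop:nonsing} alone (only $r_B\le r_A$ follows); it is safer to argue directly that $BA\,(BA)^\dagger B=B$ because $\mathcal{R}(B)=\mathcal{R}(BA)$, which is what the paper's formula $A^\dagger_{\mathcal{N}(B)}=(BA)^\dagger B$ effectively encodes.
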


\begin{proof}
By writing the fixed point as $x^* = B\, y$ with $y \in \R^m$
it follows from \eqref{eq:BABA} that $x^* = B\, (B\, A\, B)^{\dagger} B\, b$,
and the first two expressions in \eqref{eq:fixpointnoisy} are obtained by recognizing
that $B\, ((B\, A)\, B)^{\dagger} = (B\, A)_{\mathcal{R}(B)}^{\dagger}$ and
$(B\, (A\, B))^{\dagger} B = (A\, B)_{\mathcal{N}(B)}^{\dagger}$;
cf.~(\ref{eq:oblpinv}).
We now introduce the SVD
  \[
    B\, A = U\, \Sigma\, V^T \ , \qquad
    \Sigma\in\R^{p\times p} \ , \quad U, V \in \R^{n\times p} \ ,
  \]
where $p = \mathrm{rank}(BA)$.
Inserting this in \eqref{eq:BABA} we get $U\, \Sigma\, V^T B\ y = B\, b$,
and by multiplying from the left with $\Sigma^{-1} U^T$ we obtain
$V^T B\ y = \Sigma^{-1} U^T B\, b$.
The solution $y$ of minimum norm is given by
  \[
    y = (V^T B)^{\dagger} \Sigma^{-1} U^T B\, b \ ,
  \]
and hence
  \[
    x^* = B\, (V^T B)^{\dagger} \Sigma^{-1} U^T B\, b
    = B\, (V^T B)^{\dagger} V^T V \Sigma^{-1} U^T B\, b
    = B\, (V^T B)^{\dagger} V^T (B\, A)^{\dagger} B\, b.
  \]
By recognizing $\Poblb = B\, (V^T B)^{\dagger} V^T$ as the oblique projector
onto $\mathcal{R}(B)$ along $\mathcal{N}(V^T)=\mathcal{N}(BA)$,
cf.~(\ref{eq:oblproj}), and
$A_{\mathcal{N}(B)}^{\dagger} = (B\, A)^{\dagger} B$ as the oblique pseudoinverse
of $A$ along $\mathcal{N}(B)$, cf.~(\ref{eq:oblpinv}), we obtain
the third expression in~\eqref{eq:fixpointnoisy}.

For the special case $b = \bar{b} = A\, \bar{x}$
the fixed point satisfies $\bar{x}^* \in \mathcal{R}(BA)$,
and hence we can write it as $\bar{x}^*=B\, A\, \bar{y}$ for some vector $\bar{y}\in \R^n$.
According to $B\, A\, \bar{x}^* = B\, \bar{b}=B\, A\, \bar{x}$, we obtain
  \[
    \bar{x}^* = B\, A \, (B\, A\, B\, A)^{\dagger} B\, A\, \bar{x} =
    B\, A \, \bigl( ((B\, A)^T)^T B\, A \bigr)^\dagger ((B\, A)^T)^T \bar{x},
  \]
and we recognize
$\Pobl = B\, A \bigl( ((B\, A)^T)^T B\, A \bigr)^\dagger ((B\, A)^T)^T$
as the oblique projector onto $\mathcal{R}(BA)$ along
$\mathcal{R}((BA)^T)^{\perp} = \mathcal{N}(BA)$.
\end{proof}

The sensitivity of the fixed point to perturbations of the
right-hand side can be characterized as follows.

\begin{corollary}
\label{cor:pb}
Let $\bar{x}^*$ and $x^*$ denote the fixed point of the \BA\ when applied
to the noise-free data $\bar{b}$ and the noisy data $b = \bar{b} + e$, respectively.
Then
  \[
    \| x^* - \bar{x}^* \| \leq \| \Poblb \| \,
    \| A_{\mathcal{N}(B)}^{\dagger} \| \, \| e \| \ .
  \]
If $m \geq n$ and $A$ and $B$ have full rank then
  \[
    \| x^* - \bar{x}^* \| \leq \| A_{\mathcal{N}(B)}^{\dagger} \| \, \| e \| \ .
  \]
When $B=A^T$ then the oblique pseudoinverse
$A_{\mathcal{N}(B)}^{\dagger}$ is the ordinary pseudoinverse $A^{\dagger}$
and we obtain the traditional least-squares perturbation bound.
\end{corollary}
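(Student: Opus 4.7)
\medskip

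The plan is to apply the third expression in \eqref{eq:fixpointnoisy}, namely $x^* = \Poblb \, A_{\mathcal{N}(B)}^{\dagger} b$, which is linear in the right-hand side. Substituting $b = \bar{b} + e$ and subtracting $\bar{x}^* = \Poblb \, A_{\mathcal{N}(B)}^{\dagger} \bar{b}$ yields
\[
x^* - \bar{x}^* = \Poblb \, A_{\mathcal{N}(B)}^{\dagger} \, e,
\]
and the first bound follows directly from submultiplicativity of the 2-norm. This is the main workhorse; the key insight is simply that Theorem~\ref{prop:fixpoint} has already given us a closed-form linear expression for the fixed point, so the sensitivity analysis collapses to a norm inequality.

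For the special case $m \geq n$ with $A$ and $B$ of full rank, I would verify that the oblique projector reduces to the identity. Since $B \in \mathbb{R}^{n\times m}$ has rank $n$, we have $\mathcal{R}(B) = \mathbb{R}^n$. Under the standing hypothesis of Proposition~\ref{prop:nonsing}, condition \ref{joint} forces $\mathcal{R}(B) \cap \mathcal{N}(A) = \{0\}$, which combined with $A$ having full column rank gives $\mathcal{N}(BA) = \{0\}$. Hence $\Poblb = P_{\mathbb{R}^n,\{0\}} = I$, and $\|\Poblb\| = 1$, producing the sharpened bound.

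Finally, for the interpretation when $B = A^T$, I would use the defining formula \eqref{eq:oblpinv} for the oblique pseudoinverse. Here $\mathcal{N}(B) = \mathcal{N}(A^T) = \mathcal{R}(A)^{\perp}$, so the complementary subspace to $\mathcal{R}(A)$ along which we project is exactly its orthogonal complement. As remarked immediately after \eqref{eq:oblpinv}, this is precisely the case in which the oblique pseudoinverse coincides with the Moore--Penrose pseudoinverse $A^{\dagger}$, recovering the classical least-squares perturbation bound $\|x^* - \bar{x}^*\| \leq \|A^{\dagger}\|\,\|e\|$.

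I do not expect a real obstacle: once \eqref{eq:fixpointnoisy} is in hand the corollary is essentially a one-line consequence, and the only minor verifications are the identifications of $\Poblb$ with $I$ in the full-rank case and of $A_{\mathcal{N}(A^T)}^{\dagger}$ with $A^{\dagger}$ in the matched case, both of which follow from the projector/pseudoinverse formulas already recalled in the preliminaries.
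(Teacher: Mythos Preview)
Your proposal is correct and follows essentially the same approach as the paper: the first bound is obtained directly from the third expression in \eqref{eq:fixpointnoisy} by linearity and submultiplicativity, and the second bound follows from the observation that $\Poblb = I$ in the full-rank overdetermined case. You have simply fleshed out the details (the verification that $\mathcal{R}(B)=\mathbb{R}^n$ and $\mathcal{N}(BA)=\{0\}$, and the reduction of the oblique pseudoinverse to $A^\dagger$ when $B=A^T$) that the paper leaves implicit.
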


\begin{proof}
The first bound is a direct consequence of \eqref{eq:fixpointnoisy}, and
the second bound follows from the fact that $\Poblb = I$ when
$m\geq n = \mathrm{rank}(A)$.
\end{proof}

The conclusions to be drawn from the analysis in this section is that the conditions
for the existence of a fixed point of the \BA\ are
rather strict, and that the fixed point is potentially very sensitive to data errors
since the matrix $A$ is ill-conditioned in inverse problems.
Moreover, it is very difficult to check the existence conditions
in a practical application, and it appears that they are very often
violated in the available software systems.
This motivates the development of a modified iterative method that is always
guaranteed to have a fixed point, which we introduce and analyze in the rest of
this paper.

\section{A Modified Algorithm}

In our numerical studies with ASTRA and other software packages for CT,
we have found that very often the convergence condition in \eqref{eq:BAcc}
is violated in that $B\, A$ has one or more eigenvalues with negative
real part.
As a consequence the iteration has no fixed point and the typical situation is
that the iterates $x^k$, after some iterations, start to diverge.
We illustrate this in Section~\ref{sec:numerical}.

\subsection{The Shifted BA Iteration}

To remedy this non-convergence issue, we propose a modified version
of the \BA\ that has guaranteed convergence and whose fixed point
approximates the exact solution $\bar{x}$.
In addition, the modified method should exhibit semi-convergence
properties similar to the \BA.
We refer to the modified algorithm as the \SBA,
and it guarantees convergence of the iterations
for appropriate choices of the two parameters
$\alpha > 0$ and $\omega> 0$:
  \begin{equation}
  \label{eq:ShiftedBAIteration}
    x^{k+1} = (1-\alpha\,\omega) \, x^k + \omega \, B \, (b - A\, x^k) \ , \qquad
    k = 0,1,2,\ldots \ .
\end{equation}
This scheme is motivated by the Tikhonov problem,
  \begin{equation}
  \label{eq:Tikhonov}
    x_{\alpha} = \arg \min_x \left\{ \| A \, x - b \|^2 +
    \alpha\, \| x \|^2 \right\} = ( A^TA + \alpha\, I )^{-1} A^T b \ ,
  \end{equation}
for which a gradient descent step takes the form
  \[
    x^{k+1} = x^k - \omega\, ( A^T (b - A \, x^k) + \alpha\, x^k ) =
    (1 - \alpha\,\omega)\, x^k + \omega\, A^T (b - A\, x^k) \ .
  \]
Hence, if $B=A^T$ then with a properly chosen $\omega$ the iteration
\eqref{eq:ShiftedBAIteration} converges to a Tikhonov solution $x_{\alpha}$.
Below we study the convergence properties of \eqref{eq:ShiftedBAIteration}
with $B \neq A^T$.

The matrix that governs the iterations for the \SBA\
(\ref{eq:ShiftedBAIteration}) is $B\, A + \alpha\, I$ with $I$ as the identity matrix,
whose eigenvalues are $\lambda_j + \alpha$
(where $\lambda_j$ are the eigenvalues of $BA$).
Our key idea is that by a proper choice of the additional
positive parameter $\alpha$
we can ensure that all these eigenvalues have a nonnegative real part\,---\,thus
ensuring convergence.
The shift needs to be just large enough that
$\Re(\lambda_j) + \alpha > 0$ for those $\lambda_j \neq -\alpha$.
At the same time, if $\alpha$ is small then the fixed point
will be an approximation to the exact solution~$\bar{x}$.
Hence, in contrast to the \BA\ the shifted version has a unique fixed point
that is always attained for both noisy and noise-free data.
Our new approach can therefore be viewed as a modification where both a
regularization term and semi-convergence of the iterations are used for noisy data.

We note that the \SBA\ is mathematically equivalent to applying the \BA\
to the augmented matrices and vector
 \begin{equation}\label{augMV}
   \begin{bmatrix} A \\ \sqrt{\alpha} I \end{bmatrix} , \qquad
   [ \, B \, , \, \sqrt{\alpha} I \, ] \ , \qquad
   \begin{bmatrix} b \\ 0 \end{bmatrix} .
 \end{equation}
A similar idea was used in \cite[\S 3.2]{ElHN12}, for the case $B=A^T$, to
perform convergence analysis for the case $\mathrm{rank}(A) < n$.
According to \cite[Props.~3.1 and 3.2]{ElHa18}, with the augmented matrices and vector defined in \eqref{augMV}
we obtain the following convergence criterion for the \SBA.

\begin{theorem}
\label{thm:rho}
Let $\lambda_j$ denote those eigenvalues of $BA$ that are different from $-\alpha$.
Then the \SBA\ \eqref{eq:ShiftedBAIteration}
converges to a fixed point if and only if $\alpha$ and $\omega$ satisfy
  \begin{equation}
  \label{eq:SBAcc}
	0 < \omega < 2 \ \frac{\Re(\lambda_j) + \alpha}{| \lambda_j|^2
      + \alpha \, (\alpha + 2 \, \Re(\lambda_j))} \qquad
    \hbox{and} \qquad \Re(\lambda_j) + \alpha > 0 \ .
  \end{equation}
\end{theorem}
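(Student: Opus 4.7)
The plan is to exploit the equivalence, noted just before the theorem, between the \SBA\ and the plain \BA\ applied to the augmented matrices and vector in \eqref{augMV}, and then to invoke the convergence criterion \eqref{eq:BAcc} (from \cite[Prop.~3.2]{ElHa18}) applied to this augmented system.

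First, I would verify by direct block multiplication that, with the augmented matrices $\hat{A}$ and $\hat{B}$ as in \eqref{augMV}, one has $\hat{B}\hat{A} = BA + \alpha I$, and that substituting these into the \BA\ update \eqref{eq:BAIteration} reproduces \eqref{eq:ShiftedBAIteration} (this part is already essentially announced in the excerpt). Consequently, the spectral quantities governing convergence are the eigenvalues $\mu_j$ of $\hat{B}\hat{A}$, and because $\alpha I$ shifts the spectrum rigidly we have $\mu_j = \lambda_j + \alpha$ for every eigenvalue $\lambda_j$ of $BA$.

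Next, I would apply \eqref{eq:BAcc} to $\hat{B}\hat{A}$: convergence of the augmented \BA\ is equivalent to $\Re(\mu_j) > 0$ and $0 < \omega < 2\,\Re(\mu_j)/|\mu_j|^2$ for every nonzero eigenvalue $\mu_j$. A zero shifted eigenvalue $\mu_j = 0$ corresponds precisely to $\lambda_j = -\alpha$, which is exactly the case excluded in the theorem statement. For the remaining indices, $\Re(\mu_j) = \Re(\lambda_j) + \alpha$ and $|\mu_j|^2 = (\Re(\lambda_j)+\alpha)^2 + \Im(\lambda_j)^2 = |\lambda_j|^2 + \alpha(\alpha + 2\,\Re(\lambda_j))$, and substitution yields \eqref{eq:SBAcc} exactly.

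The main obstacle is merely bookkeeping\,---\,in particular, checking that the exclusion ``eigenvalues different from $-\alpha$'' in the statement of the theorem correctly matches the ``nonzero eigenvalues'' clause of the underlying criterion, and that a zero shifted eigenvalue really does lie on the benign invariant subspace and does not destroy convergence. No deeper spectral or fixed-point argument is required, since the derivation of \eqref{eq:BAcc} itself, together with the existence and uniqueness of a fixed point under these conditions, are provided by the cited propositions from \cite{ElHa18} applied to $\hat{A}$ and $\hat{B}$.
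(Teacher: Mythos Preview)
Your proposal is correct and follows essentially the same approach as the paper: reduce to the augmented \BA, identify the shifted eigenvalues $\mu_j=\lambda_j+\alpha$, and invoke \cite[Props.~3.1 and~3.2]{ElHa18}. The only difference is that the paper spells out explicitly, via a block decomposition in the coordinates $[\,N\ R\,]$ with $N$ spanning $\mathcal{N}(BA+\alpha I)$, why the eigenvalue $1$ of the iteration matrix $T$ (equivalently $\lambda_j=-\alpha$) is confined to the null space of $C=BA+\alpha I$ and is therefore irrelevant for convergence; you correctly defer this to the ``nonzero eigenvalues'' clause of the cited criterion, which is entirely legitimate.
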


\begin{proof}

Replacing the matrices $B$ and $A$ in the \BA\ with the
augmented ones from \eqref{augMV}, we define
$C = BA+\alpha I$ and $T=I-\omega C$. Then $T$ is the iteration matrix for the \SBA\ \eqref{eq:ShiftedBAIteration},
i.e.,
\begin{equation} \label {eq:Titer}
x^{k+1}=Tx^k+\omega Bb,
\end{equation}
and any fixed point $x^*_\alpha$ of
\eqref{eq:ShiftedBAIteration} satisfies the equation
\begin{equation}
\label{Cx=Bb}
C\, x^*_\alpha=B\, b \ .
\end{equation}

Let $P = P_{\mathcal{R}(C^T)}$ be the orthogonal projector onto $\mathcal{R}(C^T)
= \mathcal{N}(C)^\perp$. In view of the presence of the projection $P$, we consider the new coordinate system
given by the orthogonal matrix $\NR$,
where $N$ and $R$ are matrices with orthonormal columns spanning $\mathcal{N}(C)$
and $\mathcal{R}(C^T)$, respectively. We examine $P\, T$ in the new coordinates
where $\NR^T P\, \NR\ \NR^T \, T \, \NR$ takes the form
  \[
    \left[ \begin{array}{cc} 0 & 0 \\[2mm] 0 & I \end{array} \right] \
    \left[ \begin{array}{cc} I & -\omega \, N^TCR \\[2mm] 0 & I-\omega R^TCR
    \end{array} \right]
    = \left[ \begin{array}{cc} 0 & 0 \\[2mm] 0 & I-\omega R^TCR \end{array} \right].
  \]
For the first block row, it holds that
  \[
    Tx=x \quad \Leftrightarrow \quad C x = 0 \quad \Leftrightarrow
    \quad x\in \mathcal{N}(C) \quad \Leftrightarrow \quad 
    \hbox{$-\alpha$ is an eigenvalue of $BA$.}
  \]
This shows the eigenvalue $1$ of $T$ is associated with the eigenspace $\mathcal{N}(C)$.
We see that because of the projector operator, this eigenvalue,
which corresponds to eigenvalues of $BA$ equal to $-\alpha$, is irrelevant for convergence.
From the second block row, it suffices to consider $PT$ as operator
$\mathcal{R}(C^T) \to \mathcal{R}(C^T)$, where it holds that $PT = T$.
Combining these facts, we conclude that it is enough to consider the eigenvalues
$1-\omega (\lambda_j+\alpha)$ of $T$, where $\lambda_j$ is not equal to $-\alpha$.

Then, applying the results in \cite[Props.~3.1 and 3.2]{ElHa18} to the augmented matrices
and vector \eqref{augMV}, we obtain the sufficient and necessary condition of convergence
with respect to $\alpha$ and $\omega$.
\end{proof}

When we have convergence, a fixed point $x_{\alpha}^*$ of the \SBA\ satisfies
  \begin{equation}
  \label{eq:fispointshifted}
    x_{\alpha}^* = (1-\alpha\,\omega)\, x_{\alpha}^* + \omega\, B\, (b - A\, x^*)
    \qquad \Longleftrightarrow \qquad
    ( B\, A + \alpha\, I)\, x_{\alpha}^* = B\, b
  \end{equation}
and, similar to Theorem~\ref{prop:fixpoint}, we can characterize this fixed point as follows.

\begin{theorem}
\label{thm:fixpointshifted}
Assume that $B\, A + \alpha\, I$ is nonsingular.
The fixed point of the \SBA\ \eqref{eq:ShiftedBAIteration}
applied to $b$, with starting vector $x^0 = 0$ and $\alpha>0$, satisfies
  \begin{equation}
  \label{eq:fixpointshifted}
    x_{\alpha}^* = (B\, A + \alpha\, I)^{-1} B\, b =
    B\, (A\, B + \alpha\, I)^{-1} b \ , \qquad
    x_{\alpha}^* \in \mathcal{R}(B) \ .
  \end{equation}
For noise-free data $\bar{b}=A\,\bar{x}$ the fixed point
$\bar{x}_{\alpha}^*$ satisfies
  \[
    \bar{x}_{\alpha}^* = (B\, A + \alpha\, I)^{-1} B\, A\ \bar{x} \ , \qquad
    \bar{x}_{\alpha}^* \in \mathcal{R}(BA) \ , \qquad
    \bar{x} - \bar{x}_{\alpha}^* = \alpha\, (B\, A + \alpha\, I)^{-1} \bar{x} \ .
  \]
\end{theorem}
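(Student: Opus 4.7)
The plan is to start from the fixed point equation
$(B\,A + \alpha\, I)\, x_\alpha^* = B\, b$
already derived in \eqref{eq:fispointshifted}.
Since $B\,A + \alpha\, I$ is assumed nonsingular, directly inverting yields the first expression
$x_\alpha^* = (B\,A + \alpha\, I)^{-1} B\, b$.

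To obtain the second expression, I would invoke the ``push-through'' identity
$(B\,A + \alpha\, I)^{-1} B = B\, (A\,B + \alpha\, I)^{-1}$,
which follows from the elementary algebraic equality
$B\,(A\,B + \alpha\, I) = B\,A\,B + \alpha\, B = (B\,A + \alpha\, I)\,B$,
after left-multiplying by $(B\,A + \alpha\, I)^{-1}$ and right-multiplying by
$(A\,B + \alpha\, I)^{-1}$.
This step requires $A\,B + \alpha\, I$ to also be nonsingular; this is justified by
the well-known fact that $A\,B$ and $B\,A$ share the same nonzero spectrum, so
$-\alpha \neq 0$ being absent from the eigenvalues of $B\,A$ forces it to be absent
from those of $A\,B$ as well.
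The membership $x_\alpha^* \in \mathcal{R}(B)$ is then immediate from the
factored form $x_\alpha^* = B\,(A\,B + \alpha\, I)^{-1} b$.

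For the noise-free case, I would substitute $b = \bar b = A\,\bar x$ into the first
expression to get $\bar x_\alpha^* = (B\,A + \alpha\, I)^{-1} B\,A\,\bar x$.
To verify $\bar x_\alpha^* \in \mathcal{R}(B\,A)$, I would observe that
$(B\,A + \alpha\, I)^{-1}$ leaves $\mathcal{R}(B\,A)$ invariant: if $y \in \mathcal{R}(B\,A)$
and $z = (B\,A + \alpha\, I)^{-1} y$, then $\alpha\, z = y - B\,A\,z \in \mathcal{R}(B\,A)$,
hence $z \in \mathcal{R}(B\,A)$. Finally, the perturbation identity follows from
the one-line manipulation
\[
  \bar x - \bar x_\alpha^* =
  (B\,A + \alpha\, I)^{-1} \bigl[ (B\,A + \alpha\, I) - B\,A \bigr]\, \bar x
  = \alpha\,(B\,A + \alpha\, I)^{-1}\, \bar x.
\]

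No step here is genuinely hard; the only mild obstacle is the sanity check that
$A\,B + \alpha\, I$ is invertible whenever $B\,A + \alpha\, I$ is, which I would
dispatch by the shared-nonzero-spectrum argument above. The rest is a direct inversion
and a couple of algebraic rearrangements.
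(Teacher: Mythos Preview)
Your proof is correct and follows essentially the same route as the paper: derive the first expression from the fixed-point equation, use the push-through identity for the second (the paper phrases this as writing $x_\alpha^* = B\,y$ and solving for $y$, but the underlying algebra is the same, and both you and the paper invoke the shared-nonzero-spectrum fact to justify invertibility of $A\,B+\alpha\,I$), and establish the final perturbation identity by the same one-line rearrangement.

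The one genuine difference is in proving $\bar x_\alpha^* \in \mathcal{R}(B\,A)$. The paper argues via an eigendecomposition $B\,A = W\,\mathrm{diag}(\lambda_i)\,W^{-1}$, which tacitly assumes $B\,A$ is diagonalizable. Your invariance argument---that $(B\,A+\alpha\,I)^{-1}$ maps $\mathcal{R}(B\,A)$ into itself because $\alpha\,z = y - B\,A\,z \in \mathcal{R}(B\,A)$---is both more elementary and more general, since it requires no spectral hypothesis on $B\,A$. This is a small but genuine improvement over the paper's version.
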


\begin{proof}
The first relation follows immediately from \eqref{eq:fispointshifted}.
To obtain the second relation we write $x_{\alpha}^* = B\, y$ and note that
$(B\, A + \alpha\, I)\, B\, y = B\, (A\, B + \alpha\, I)\, y$.
Hence $y$ must satisfy
  \[
    B\, \bigl( (A\, B + \alpha\, I) \, y - b \bigr) = 0 \ ,
  \]
and therefore $y$ has the general form with an arbitrary component
in the null space of $B\, (A\, B + \alpha\, I)$ as well as $B$:
  \[
    y = (A\, B + \alpha\, I)^{-1} b + z \ , \qquad
    z \in \mathcal{N}(B) \ ,
  \]
We note that $A\, B + \alpha\, I$ is nonsingular due to our
assumption that $B\, A + \alpha\, I$ is nonsingular, cf.\ \cite[Thm.~1.3.22]{HJ}.
Thus
  \[
    x_{\alpha}^* = B\, y = B\, \bigl( (A\, B + \alpha\, I)^{-1} b + z \bigr)
    = B\, (A\, B + \alpha\, I)^{-1} b \ ,
  \]
and it follows immediately that $x_{\alpha}^*\in\mathcal{R}(B)$.
The first results for $\bar{x}_{\alpha}^*$ follows immediately
from \eqref{eq:fixpointshifted}.  To show the second result let $B\, A$
have the eigendecomposition $B\, A = W\, \mathrm{diag}(\lambda_i)\, W^{-1}$; then
$(B\, A + \alpha\, I)^{-1} B\, A = W\,\mathrm{diag}(\lambda_i/(\lambda_i+\alpha))\, W^{-1}$
from which it follows that
$\bar{x}_{\alpha}^* \in \mathrm{span} \{ w_1,\ldots,w_{\mathrm{rank}(BA)} \}
= \mathcal{R}(BA)$.
The third result follows from the relation
  \[
    (B\, A + \alpha\, I)\, (\bar{x} - \bar{x}_{\alpha}^*) =
    (B\, A + \alpha\, I)\, (\bar{x} - (B\, A + \alpha\, I)^{-1} B\, A\,
    \bar{x} ) = \alpha\,\bar{x} \ .
  \]
\end{proof}

Note that the results in \eqref{eq:fixpointshifted} can also be derived by
applying the augmented matrices and vector defined in \eqref{augMV} to \eqref{eq:fixpointnoisy}.

To summarize, we formulated the convergence conditions for the \SBA\ in
terms of the shift $\alpha$ and the relaxation parameter $\omega$, and
we gave explicit expressions for the fixed point of this iterative method.

\subsection{First-Order Perturbation Analysis}
\label{sec:PerturbationAnalysis}

Recall that the fixed point $x_{\alpha}^*$ of the \SBA\
is the Tikhonov solution in \eqref{eq:Tikhonov} when $B=A^T$.
Following \cite{ElHa18} it is natural to give a general perturbation analysis of
the Tikhonov problem
when different perturbations are introduced in the matrices $A$ and $A^T$
in the corresponding normal equations $A^T A\, x = A^T b$.
A special instance of this analysis is when $B$ is an unmatched transpose of $A$,
and where our analysis lets us bound the difference between the fixed point
$x_{\alpha}^*$ and the exact solution $\bar{x}$.

We introduce the perturbed quantities
  \begin{equation}
  \label{eq:perturbations}
    \widetilde{A} = A + E_A, \qquad
    \UT = A^T + E_{A^T}, \qquad
	b = \bar{b} + e \ ,
  \end{equation}
with $E_A \in \R^{m \times n}$, $E_{A^T} \in \R^{n\times m}$ and
$e \in \R^m$.
Moreover we define $\tilde{x}_{\alpha}$ as the solution to
the \textsc{Regularized Unmatched Normal Equations}
  \[
    ( \UT \widetilde{A} + \alpha \, I) \, x = \UT b \ .
  \]
We want to compare $\tilde{x}_{\alpha}$ to the exact solution
$\bar{x}$.
To do this, we introduce the Tikhonov solution
$\bar{x}_{\alpha} = ( A^TA + \alpha\, I )^{-1} A^T \bar{b}$
to the unperturbed problem \eqref{eq:Tikhonov}.

We then split the error into a \textsl{perturbation error}
$\tilde{x}_\alpha - \bar{x}_\alpha$ and a
\textsl{regularization error} $\bar{x}_\alpha - \bar{x}$ as follows:
  \begin{equation}
  \label{eq:split}
    \tilde{x}_\alpha - \bar{x} = (\tilde{x}_\alpha - \bar{x}_\alpha) +
    (\bar{x}_\alpha - \bar{x}) \ .
  \end{equation}
This approach allows us to study the effect of the matrix and right-hand side
perturbations in isolation from the effect that Tikhonov regularization
has on a noise-free system.

\begin{theorem}
\label{prop:PerturbationError}
With the definitions in \eqref{eq:perturbations} and \eqref{eq:split} we have
the following first-order
error bounds obtained by omitting higher-order terms:
  \begin{align*}
	\| \tilde{x}_{\alpha} - \bar{x}_{\alpha} \| & \, \lesssim\,
      \frac{1}{2\sqrt{\alpha}} \, \| e \| + \frac{1}{2\sqrt{\alpha}} \,
      \| E_A \, \bar{x}_\alpha\| +
      \frac{1}{\alpha} \, \| E_{A^T} \, (\bar{b} - A\, \bar{x}_\alpha) \|
     \ , \\	
	\frac{\| \tilde{x}_\alpha - \bar{x}_\alpha \|}{\| \bar{x}_\alpha \|} & \, \lesssim\,
      \frac{\| A \|}{2\sqrt{\alpha}} \, \frac{\| e \|}{\| A\, \bar{x}_\alpha \|} +
      \frac{\| A \|}{2\sqrt{\alpha}} \, \frac{\| E_A \|}{\| A \|} \ +
      \frac{\| A \|^2}{\alpha} \frac{\| E_{A^T} \|}{\| A \|} \,
      \frac{\| \bar{b} - A\, \bar{x}_\alpha \|}{\| A\, \bar{x}_\alpha \|}
      \ .
  \end{align*}
\end{theorem}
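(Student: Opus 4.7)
The plan is a standard first-order Tikhonov-style perturbation calculation: expand $\widetilde{M}:=\UT \widetilde{A}+\alpha I$ and $\UT b$ around the matched quantities $M:=A^TA+\alpha I$ and $A^T\bar{b}$, drop products of perturbations, and then invoke two classical norm bounds involving $M^{-1}$.

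First I would expand
\[
  \widetilde{M} = M + A^T E_A + E_{A^T} A + E_{A^T} E_A,
  \qquad
  \UT b = A^T\bar{b} + A^T e + E_{A^T}\bar{b} + E_{A^T} e,
\]
and start from $\widetilde{M}(\tilde x_\alpha-\bar x_\alpha) = \UT b - \widetilde M\,\bar x_\alpha$. Using $M\bar x_\alpha = A^T\bar b$ and discarding the terms $E_{A^T}e$ and $E_{A^T}E_A\bar x_\alpha$ (which are quadratic in the perturbations), the right-hand side simplifies to
\[
  A^T(e - E_A\,\bar x_\alpha) + E_{A^T}(\bar b - A\,\bar x_\alpha).
\]
Next I would replace $\widetilde M^{-1}$ by $M^{-1}$ (the difference contributes only higher-order terms), obtaining the first-order identity
\[
  \tilde x_\alpha - \bar x_\alpha \,\approx\, M^{-1} A^T (e - E_A\,\bar x_\alpha)
   + M^{-1} E_{A^T}(\bar b - A\,\bar x_\alpha).
\]

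The two key norm estimates are obtained from the SVD $A = U\Sigma V^T$: writing $M^{-1} = V(\Sigma^2+\alpha I)^{-1}V^T$ and $M^{-1}A^T = V(\Sigma^2+\alpha I)^{-1}\Sigma\,U^T$ gives
\[
  \| M^{-1}\| \,\leq\, \frac{1}{\alpha},
  \qquad
  \| M^{-1} A^T\| \,=\, \max_i \frac{\sigma_i}{\sigma_i^2+\alpha} \,\leq\, \frac{1}{2\sqrt{\alpha}},
\]
the second bound being attained at $\sigma_i=\sqrt{\alpha}$ by an elementary calculus argument. Inserting these into the first-order identity, together with the triangle inequality, produces the absolute bound in the theorem.

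For the relative bound, I would simply divide through by $\|\bar x_\alpha\|$ and rescale each term. The first and third terms use $1/\|\bar x_\alpha\| \leq \|A\|/\|A\bar x_\alpha\|$ (coming from $\|A\bar x_\alpha\|\leq \|A\|\,\|\bar x_\alpha\|$) to replace $\|\bar x_\alpha\|$ by $\|A\bar x_\alpha\|/\|A\|$, while the middle term uses $\|E_A\,\bar x_\alpha\|\leq \|E_A\|\,\|\bar x_\alpha\|$ followed by inserting the factor $\|A\|/\|A\|$. No conceptual difficulty remains; the main obstacle is essentially bookkeeping: keeping track of which cross-products genuinely are second-order (so that the approximation $\widetilde M^{-1}\approx M^{-1}$ does not silently absorb a first-order contribution), and producing the tight constant $1/(2\sqrt{\alpha})$ rather than a looser $1/\sqrt{\alpha}$ bound.
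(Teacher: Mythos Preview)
Your proposal is correct and follows essentially the same route as the paper: expand the perturbed normal equations, use $M\bar x_\alpha = A^T\bar b$, drop second-order terms to arrive at $(A^TA+\alpha I)\,\delta x_\alpha \approx A^T(e-E_A\bar x_\alpha)+E_{A^T}(\bar b - A\bar x_\alpha)$, then apply the SVD-based bounds $\|(A^TA+\alpha I)^{-1}A^T\|\le 1/(2\sqrt{\alpha})$ and $\|(A^TA+\alpha I)^{-1}\|\le 1/\alpha$, and finally rescale via $\|A\bar x_\alpha\|\le\|A\|\,\|\bar x_\alpha\|$ for the relative bound. The only cosmetic difference is that the paper keeps $E\,\delta x_\alpha$ on the right and drops it, whereas you equivalently replace $\widetilde M^{-1}$ by $M^{-1}$; both discard the same higher-order contribution.
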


\begin{proof}
Let $\tilde{x}_\alpha = \bar{x}_\alpha + \delta x_\alpha$
and consider the perturbed system
  \begin{equation}
  \label{eq:pertsyst}
    (\UT \widetilde{A} + \alpha\, I) \, (\bar{x}_\alpha + \delta x_\alpha) =
    \UT b \ .
  \end{equation}
Moreover note that from \eqref{eq:perturbations} we have
  \[
    \UT b = (A^T + E_{A^T})\, (\bar{b} + e) = A^T \bar{b} + A^T e + E_{A^T} b
  \]
and
  \[
    \UT \widetilde{A} = (A^T + E_{A^T}) \, (A + E_A) = A^T A + E \ ,
  \]
where we have introduced
  \[
    E = A^T E_A + E_{A^T} A+E_{A^T} E_A \ .
  \]
Inserting these equations in \eqref{eq:pertsyst} we obtain
  \[
    (A^T A + E + \alpha\, I) \, (\bar{x}_\alpha + \delta x_\alpha) =
    A^T \bar{b} + A^T e + E_{A^T} b \ ,
  \]
and rearranging we get
  \[
    (A^T A + \alpha\, I) \, \bar{x}_\alpha + (A^T A + \alpha\, I) \,
    \delta x_\alpha = A^T \bar{b} + A^T e + E_{A^T} \, b - E\, \bar{x}_\alpha -
    E \, \delta x_\alpha \ .
  \]
Now using that $(A^TA+\alpha I)\,\bar{x}_\alpha = A^T \bar{b}$ and neglecting
higher-order terms we get
  \begin{align*}
    (A^T A + \alpha I) \, \delta x_\alpha & \, \approx\,
      A^T e + E_{A^T} \, b - A^T E_A \, \bar{x}_\alpha - E_{A^T} \, A \, \bar{x}_\alpha \\
    & \, = \, A^T (e - E_A \, \bar{x}_\alpha) + E_{A^T} \, (\bar{b} - A\,\bar{x}_\alpha) \ ,
  \end{align*}
 which can also be obtained by using the augmented form in the proof of
 \cite[Prop.~2.1]{ElHa18}, i.e., replacing $A$, $E$, and $b$ with
 \[
   \begin{bmatrix} A \\ \sqrt{\alpha} I \end{bmatrix} , \qquad
   \begin{bmatrix} E \\ 0 \end{bmatrix} ,  \qquad
   \begin{bmatrix} b \\ 0 \end{bmatrix} .
 \] 
This leads to the bound
  \begin{align*}
    \| \delta x_\alpha \| & \, \lesssim\, \| (A^T A + \alpha\, I)^{-1} A^T \| \
      \| (e - E_A \, \bar{x}_\alpha) \| \ + \\
    & \qquad\quad \| (A^T A + \alpha\, I)^{-1} \|  \,
      \| E_{A^T} \, ( \bar{b} - A\, \bar{x}_\alpha ) \| \\
    &\leq \frac{1}{2\sqrt{\alpha}} \, \| e \| + \frac{1}{2\sqrt{\alpha}}\,
      \| E_A \, \bar{x}_\alpha\|
      + \frac{1}{\alpha}\,\| E_{A^T} \, ( \bar{b} - A\, \bar{x}_\alpha ) \| \ ,
  \end{align*}
where we use that, with $\sigma_i$ being the $i$th singular value of $A$,
  \[
    \| (A^TA + \alpha\, I)^{-1}A^T \| = \max_i
    \frac{\sigma_i}{\sigma_i^2+\alpha} \leq \frac{1}{2\sqrt{\alpha}}
  \]
and
  \[
    \| (A^T A + \alpha I)^{-1} \| = \max_i \frac{1}{\sigma_i^2+\alpha}
    \leq \frac{1}{\alpha} \ .
  \]
For the relative error we get
  \begin{align*}
    \frac{\| \delta x_\alpha \|}{\| \bar{x}_\alpha \|} & \, \lesssim\,
      \frac{1}{2\sqrt{\alpha}}\,\frac{\| e \|}{\| \bar{x}_\alpha \|} +
      \frac{1}{2\sqrt{\alpha}}\,
      \frac{\| E_A \, \bar{x}_\alpha \|}{\| \bar{x}_\alpha \|}
      + \frac{1}{\alpha}\,
      \frac{\|E_{A^T}\,(\bar{b}-A\,\bar{x}_\alpha)\|}{\| \bar{x}_\alpha \|}\\
    &\leq \frac{\| A \|}{2\sqrt{\alpha}}\,\frac{\| e \|}{\| A\,\bar{x}_\alpha \|} +
      \frac{\| A \|}{2\sqrt{\alpha}}\,\frac{\| E_A \|}{\| A \|} +
      \frac{\| A \|^2}{\alpha} \frac{\| E_{A^T} \|}{\| A \|}
      \, \frac{\| \bar{b} - A\,\bar{x}_\alpha \|}{\| A\,\bar{x}_\alpha \|} \ ,
  \end{align*}
where we used that
  \[
    \| A\,\bar{x}_\alpha \| \leq \| A \| \, \| \bar{x}_\alpha \| \quad
    \Longleftrightarrow \quad \frac{1}{\| \bar{x}_\alpha \|} \leq
    \frac{\| A \|}{\| A\,\bar{x}_\alpha \|} \ .
  \]
\end{proof}
To complete the analysis we need to bound the regularization error
$\bar{x}_\alpha - \bar{x}$ associated with the noise-free system.
To obtain a useful bound we need to incorporate the fact
that we solve a discretized inverse problem.
This is done in the following theorem from \cite{Hans90}
(see also \cite[Thm.~4.5.1]{HansenDIP}).

\begin{theorem}
Introduce SVD of $A$ as $A = \sum_{i=1}^{\min(m,n)} u_i\, \sigma_i\, v_i^T$
and assume that the noise-free right-hand side $\bar{b}$ is given by the model
  \[
	u_i^T \, \bar{b} = \sigma_i^\nu \ , \qquad i=1,2,\dots,\min(m,n) \ ,
  \]
in which $\nu\geq0$ is a model parameter that controls the decay of these coefficients.
Then
  \[
	\frac{\| \bar{x}_\alpha - \bar{x} \|}{\| \bar{x} \|} \leq
	\begin{cases}
	  \sqrt{n}, & 0 \leq \nu < 1 \ , \\[1mm]
	  \sqrt{n}\left( \displaystyle \frac{\sqrt{\alpha}}{\| A \|}
        \right)^{\nu-1}, & 1 \leq \nu < 3 \ , \\[4mm]
	  \sqrt{n}\left( \displaystyle \frac{\sqrt{\alpha}}{\| A \|}\right)^{\!2},
        & 3 \leq \nu \ .
	\end{cases}
  \]
\end{theorem}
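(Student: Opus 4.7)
The plan is to work entirely in the SVD basis of $A$, since $\bar{x}_\alpha - \bar{x}$ decouples coordinate-wise into a sum over singular components. Write $A = \sum_{i=1}^{\min(m,n)} u_i\,\sigma_i\,v_i^T$ and use the assumption $u_i^T\bar{b} = \sigma_i^\nu$ to obtain
\[
  \bar{x} = A^\dagger \bar{b} = \sum_i \sigma_i^{\nu-1}\,v_i, \qquad
  \bar{x}_\alpha = (A^TA+\alpha I)^{-1}A^T\bar{b} = \sum_i \frac{\sigma_i^{\nu+1}}{\sigma_i^2+\alpha}\,v_i,
\]
so that $\bar{x} - \bar{x}_\alpha = \sum_i f_i\,\sigma_i^{\nu-1}\,v_i$ with Tikhonov filter factors $f_i = \alpha/(\sigma_i^2+\alpha) \in (0,1]$. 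By orthonormality of $\{v_i\}$,
\[
  \|\bar{x}-\bar{x}_\alpha\|^2 = \sum_i f_i^2\,\sigma_i^{2(\nu-1)}, \qquad
  \|\bar{x}\|^2 = \sum_i \sigma_i^{2(\nu-1)}.
\]

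The case $0 \le \nu < 1$ is immediate: since $f_i \le 1$, the ratio $\|\bar{x}-\bar{x}_\alpha\|/\|\bar{x}\|$ is at most $1 \le \sqrt{n}$. The cases $\nu \ge 1$ require bounding $f_i\,\sigma_i^{\nu-1}$ uniformly in $\sigma_i$. The plan is to introduce the rescaling $s = \sigma_i/\sqrt{\alpha}$, which yields
\[
  f_i\,\sigma_i^{\nu-1} = \alpha^{(\nu-1)/2}\ \frac{s^{\nu-1}}{s^2+1}.
\]
For $1 \le \nu \le 3$, elementary case splitting ($s \le 1$ versus $s \ge 1$) gives $s^{\nu-1} \le \max(1, s^2) \le s^2+1$, hence $s^{\nu-1}/(s^2+1) \le 1$ and therefore $f_i\,\sigma_i^{\nu-1} \le \alpha^{(\nu-1)/2}$. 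For $\nu \ge 3$ I would instead factor $\sigma_i^{\nu-1} = \sigma_i^2\cdot\sigma_i^{\nu-3}$ and use the two ingredients $f_i\,\sigma_i^2 = \alpha\,\sigma_i^2/(\sigma_i^2+\alpha) \le \alpha$ and $\sigma_i^{\nu-3} \le \|A\|^{\nu-3}$, which gives $f_i\,\sigma_i^{\nu-1} \le \alpha\,\|A\|^{\nu-3}$.

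Combining these pointwise bounds with the crude inequality $\sum_i(\cdot)^2 \le n \max_i(\cdot)^2$ yields $\|\bar{x}-\bar{x}_\alpha\|^2 \le n\,\alpha^{\nu-1}$ for $1 \le \nu < 3$ and $\|\bar{x}-\bar{x}_\alpha\|^2 \le n\,\alpha^2\|A\|^{2(\nu-3)}$ for $\nu \ge 3$. The denominator is handled by retaining only the top singular-value term: $\|\bar{x}\|^2 \ge \sigma_1^{2(\nu-1)} = \|A\|^{2(\nu-1)}$ for $\nu \ge 1$. Dividing then produces exactly the factors $(\sqrt{\alpha}/\|A\|)^{\nu-1}$ and $(\sqrt{\alpha}/\|A\|)^2$ required by the theorem.

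The only genuinely delicate step is the uniform bound on $s^{\nu-1}/(s^2+1)$; finding the right way to split the exponent and exploit $\|A\| = \sigma_1$ to cancel dimensions is where the argument has to be assembled carefully. Everything else reduces to SVD bookkeeping and the monotonicity of the filter factors $f_i$.
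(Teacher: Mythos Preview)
The paper does not actually prove this theorem; it merely quotes it from \cite{Hans90} (see also \cite[Thm.~4.5.1]{HansenDIP}) without reproducing the argument. Your proof is correct and is essentially the standard SVD-based derivation one finds in those references: expand $\bar{x}-\bar{x}_\alpha$ in the right singular vectors, identify the Tikhonov filter factors $f_i=\alpha/(\sigma_i^2+\alpha)$, and bound $f_i\,\sigma_i^{\nu-1}$ uniformly in $\sigma_i$ by splitting according to the value of~$\nu$.
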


\noindent
In practice the Tikhonov regularization parameter $\alpha$ is always
less than $\| A \|^2$~\cite{HansenDIP}.
This theorem then says that the noise-free problem must satisfy the discrete
Picard condition for Tikhonov regularization to produce a useful result\,---\,which is,
of course, the case for the imaging problems we have in mind.


To summarize our results, the shift parameter $\alpha$ plays the following roles.
The regularization error decreases as $\alpha$ decreases, and if the noise-free data
satisfies the discrete Picard condition (as we expect) then
a small nonzero $\alpha$ has little influence on the regularization error.
On the other hand, as $\alpha$ decreases then the perturbation error increases.
We want to use an $\alpha$ just large enough to ensure convergence.

\section{Eigenvalue Estimation}
\label{sec:eigenvalues}

The motivation behind the \SBA\ is to introduce a small
positive shift parameter $\alpha$,
just large enough to ensure that all the shifted eigenvalues have a positive real part,
i.e., $\Re(\lambda_j) + \alpha > 0$.
To turn this principle into an efficient working algorithm,
we therefore need to be able to estimate the leftmost eigenvalue $\lambda_{\mathrm{lm}}$
of $B\, A$, the eigenvalue with the minimal real part.
If $\Re(\lambda_{\mathrm{lm}}) > 0$ then we just use the \BA,
otherwise we use the \SBA\ with $\alpha$ slightly larger than $|\Re(\lambda_{\mathrm{lm}})|$.
(In view of Theorem~\ref{thm:rho}, we might theoretically even take $\alpha$ exactly equal
to this quantity, but this is not important in practice, since the approximation
to the smallest real part of the leftmost eigenvalue will usually be an upper bound.)
It is important to note that we only have actions with $A$ and $B$ at our disposal,
and no actions with $A^T$, $B^T$, or exact shift-and-invert transformations, are possible.

Various approaches have been investigated by Meerbergen and coauthors
for the rightmost eigenvalue of a matrix $C$
(or, equivalently, the leftmost of $-C$). Several of these are ``matrix-free'',
which means that only matrix-vector products are necessary.
An approach based on Chebyshev polynomials has been proposed in \cite{MRo94}.
In \cite{MSa99}, the search space is expanded by an approximation to
$\exp(C)\, z$, where $z$ is the current Ritz vector.
However, these methods usually take a considerable number of matrix-vector
multiplications to expand the search space by one vector.

Since the leftmost eigenvalue is an eigenvalue located at the exterior of the
spectrum, Krylov based methods are often well suited.
Stewart's Krylov--Schur method \cite{Ste02} is one of the most popular methods
to compute such eigenvalues.
This method is essentially equivalent to implicitly restarted Arnoldi \cite{Sor92},
as for instance implemented in Matlab's {\tt eigs}, but has a particularly
elegant and easy-to-understand implementation.
In our experiments, a custom-made implementation of the Krylov--Schur method
proved to be, on average, a factor 1.2--1.3 faster than {\tt eigs}
in terms of matrix-vector multiplications.
We give pseudocode for the Krylov--Schur method in Algorithm~1.

\noindent\vrule height 0pt depth 0.5pt width \textwidth \\
{\bf Algorithm~1: Krylov--Schur for the leftmost eigenvalue} \\[-3mm]
\vrule height 0pt depth 0.3pt width \textwidth \\
{\bf Input:} Minimal and maximal subspace dimensions $\underline{\ell} < \bar{\ell}$,
  starting vector $v_1$, tole-\\
\hspace*{12mm} rance \texttt{tol}, functions to perform matrix-vector
  multiplications with $A$ and $B$. \\
{\bf Output:} A pair $(\theta, v)$ that approximates the leftmost eigenpair
  $(\lambda_{\mathrm{lm}}, v_{\mathrm{lm}})$, \\
\hspace*{15mm} with $\|(B\,A-\theta I)\,v\| \le \texttt{tol}$. \\
\begin{tabular}{ll}
{\footnotesize 1:} & Form the Krylov decomposition $B\,A\,V_{\bar{\ell}} =
  V_{\bar{\ell}}\, H_{\bar{\ell}} + h_{\bar{\ell}+1,\bar{\ell}}\, v_{\bar{\ell}+1}\,
  f_{\bar{\ell}}^H$\\
{\footnotesize 2:} & {\bf for} $k = 1, 2, \dots$ \\
{\footnotesize 3:} & \phantom{M} Extract Schur pairs $(\theta_j, c_j)$ from
  $H_{\bar{\ell}}$ with $j=1, \dots, \bar{\ell}$, \\
& \phantom{MM} sorted on nondecreasing real part \\
{\footnotesize 4:} & \phantom{M} {\bf if}
  $| h_{\bar{\ell}+1,\bar{\ell}} \, f_{\bar{\ell}}^H\, c_1| \le \texttt{tol}$ \\
{\footnotesize 5:} & \phantom{MMM} Accept $\theta = \theta_1$ with leftmost
  Schur vector $v = V_{\bar{\ell}}\, c_1$, {\bf stop} \\
{\footnotesize 6:} & \phantom{M} {\bf end} \\
{\footnotesize 7:} & \phantom{M} Truncate decomposition to dimension $\underline{\ell}$
 by selecting leftmost Schur vectors \\
{\footnotesize 8:} & \phantom{M} Expand the Krylov decomposition to dimension $\bar{\ell}$ \\
{\footnotesize 9:} & {\bf end}\\
\end{tabular} \\
\vrule height 0pt depth 0.5pt width \textwidth

\medskip
This algorithm uses the Krylov decomposition from \cite{Ste02} which
is a generalization of the Arnoldi decomposition and which may have complex factors.
In Line~1, the first Krylov decomposition has $f_{\bar{\ell}} = e_{\bar{\ell}}$,
the ${\bar{\ell}}$th standard basis vector.
In subsequent Krylov decompositions this vector is changed, as indicated below.
In Line~4, we exploit the fact that
  \begin{align*}
    (B\,A-\theta I)\,v & = (B\,A-\theta I)\,V_{\bar{\ell}}\, c_1 \\
    & = V_{\bar{\ell}}\, (H_{\bar{\ell}}\, c_1-\theta_1\, c_1) +
    h_{{\bar{\ell}}+1,{\bar{\ell}}} \, v_{\bar{\ell}+1}\, f_{\bar{\ell}}^H\, c_1
    = h_{{\bar{\ell}}+1,{\bar{\ell}}} \,v_{\bar{\ell}+1}\, f_{\bar{\ell}}^H\, c_1.
  \end{align*}
As described in \cite{Ste02}, the restart in Lines 7--8 is performed as follows.
Suppose $H_{\bar{\ell}} = QSQ^H$ is the Schur decomposition with the most relevant
Schur vectors (corresponding to the leftmost Ritz values in $S$)
sorted in the beginning of $Q$.
Then the method is restarted with
$V_{\underline{\ell}} := V_{\bar{\ell}} \, Q_{1:\underline{\ell}}$
instead of $V_{\bar{\ell}}$;
$S_{\underline{\ell}}$ instead of $H_{\bar{\ell}}$; and
$Q_{1:\underline{\ell}}^Hf_{\bar{\ell}}$ instead of $f_{\bar{\ell}}$.
We present numerical experiments with this method in Section~\ref{sec:TestASTRA}.

An alternative approach that uses inexact shift-and-invert operators by carrying
out inner iterations is Jacobi--Davidson \cite{SVo00}.
This inexact inner-outer type of method may be worthwhile when the leftmost eigenvalue is not
well separated from neighboring eigenvalues.
In our applications this does not seem the case, and Jacobi--Davidson performs
usually worse than Krylov--Schur.
In addition, it is not obvious to generate a preconditioner for a shifted
version of $B\,A$, which would be very helpful for Jacobi--Davidson.

In our experiments with examples from computed tomography, we find that
the matrix $B\,A$ is often close to normal (in fact, even close to symmetric; 
cf.~Section~\ref{sec:TestASTRA}).
For such problems, another alternative approach to approximate the leftmost eigenvalue
is the following.
Let
  \[
    W(C) = \{ \, z^HCz \ : \ z \in \mathbb C^n, \ \|z\|=1 \, \}
  \]
be the field of values (or numerical range) of $C$.  Then we can expect the quantity
  \[
    \nu(BA) := \min \text{Re}(W(BA))
  \]
to be close to the leftmost eigenvalue of $B\, A$.
This $\nu(BA)$ would in principle be relatively easy to approximate, since it is equal to
$\frac{1}{2} \min \lambda(BA+(BA)^T)$, which results in computing an exterior eigenvalue
of a symmetric eigenproblem.
This would mean that we can use a symmetric version of Krylov--Schur,
which saves roughly half of the reorthogonalization costs.

Unfortunately, in our applications we do not have the action with $A^T$ or $B^T$,
and the described approach is not an option.
However, as an alternative, we can still approximate $\nu(BA)$ by
  \[
    \nu(BA) \approx \nu(H_{\bar{\ell}}) \ ,
  \]
where $H_{\bar{\ell}}$ is the matrix in the Krylov decomposition
obtained with $B\, A$ after several iterations.
The computation of this quantity only requires the known $H_{\bar{\ell}}^H$,
and therefore bypasses the need of the transposes of $A$ and $B$.
Although there are usually no error bounds for this type of approximation,
it may in practice be of very good quality.
This algorithm is summarized below.

\noindent\vrule height 0pt depth 0.5pt width \textwidth \\
{\bf Algorithm~2: A field of values approximation for the leftmost eigenvalue} \\[-3mm]
\vrule height 0pt depth 0.3pt width \textwidth \\
{\bf Input:} Minimal and maximal subspace dimensions $\underline{\ell} < \bar{\ell}$,
  starting vector $v_1$, \\
\hspace*{12mm} maximum iterations \texttt{maxit}, functions to perform actions with $A$ and $B$. \\
{\bf Output:} $\theta$, the leftmost point of a projected field of values, which approximates the \\
\hspace*{15mm} leftmost point of $W(BA)$. \\
\begin{tabular}{ll}
{\footnotesize 1:} & Form the Krylov decomposition $B\,A\,V_{\bar{\ell}} =
  V_{\bar{\ell}}\, H_{\bar{\ell}} + h_{\bar{\ell}+1,\bar{\ell}}\, v_{\bar{\ell}+1}\,
  f_{\bar{\ell}}^H$\\
{\footnotesize 2:} & {\bf for} $k = 1, 2, \dots, \texttt{maxit}$ \\
{\footnotesize 3:} & \phantom{M} Extract Schur pairs $(\theta_j, c_j)$ from
  $H_{\bar{\ell}}$
  with $j=1, \dots, \bar{\ell}$, \\
& \phantom{MM} sorted on nondecreasing real part \\
{\footnotesize 4:} & \phantom{M} Truncate decomposition to dimension $\underline{\ell}$
  by selecting leftmost Schur vectors \\
{\footnotesize 5:} & \phantom{M} Expand the Krylov decomposition to dimension $\bar{\ell}$ \\
{\footnotesize 6:} & {\bf end} \\
{\footnotesize 7:} & Accept $\theta = \min \Re W(H_{\bar{\ell}})$ =
  real part of leftmost eigenvalue of $\frac{1}{2} (H_{\bar{\ell}}+H_{\bar{\ell}}^H)$ \\
\end{tabular} \\
\vrule height 0pt depth 0.5pt width \textwidth

\medskip

Note that in a type of method as in Algorithm~2, there is typically no error estimate available;
there only is a user-chosen parameter \texttt{maxit}, which often can be modest (see Section~\ref{sec:numerical}).
A main advantage of Algorithm~2 over Algorithm~1 is that it may be possible to
stop the iterations improving the Krylov decomposition earlier, before the eigenpair
of Algorithm~1 has converged to the desired tolerance.
We test both approaches in the next section.

\section{Numerical Examples}
\label{sec:numerical}

We present numerical examples with two different test problems, in order to
demonstrate the performance of our new algorithm.
All computations are carried out in MATLAB.

\subsection{Small Illustrative Test Problem}

The first test problem is quite small, with $m=n=64$, such that we can
explicitly compute the desired eigenvalues and other quantities
that allow us to analyze the algorithms' performance related to
the above theory.
The matrix $A$ is full, and it is
generated by means of the function \texttt{regutm} from
\textsc{Regularization Tools} \cite{RT} by which we can generate random test matrices
with specified singular values, while the singular vectors have the
characteristic spectral behavior of inverse problems~\cite{Hans95}.
We generate a well-conditioned matrix $A_{\mathrm{well}}$ and a more
ill-conditioned matrix $A_{\mathrm{ill}}$ with the following distribution
of singular values:
\[
A_{\mathrm{well}}: \mathtt{logspace(0,-2,m)} \ , \qquad
A_{\mathrm{ill}}: \mathtt{logspace(0,-4,m)} \ .
\]
We then generate a corresponding unmatched transpose $\UT = A^T + E_{A^T}$ by adding
random Gaussian elements to $A^T$; all elements are from
$\mathcal{N}(0,\sigma_A^2)$ where the variance is chosen such that
$\| E_{A^T} \| \, / \, \| A \| = 0.05$.
Both $A$ and $\UT$ have full rank.

The exact solution $\bar{x}$ is the one from the \texttt{shaw} test problem
\cite{RT}; it is smooth with two humps.
Then we generate the exact and noisy right-hand sides by
\begin{equation}
\label{eq:bb}
\bar{b} = A\, \bar{x} \ , \qquad b = \bar{b} + e \ , \qquad
\| e \| \, / \, \| \bar{b} \| = 0.05 \ ,
\end{equation}
where the random elements of $e$ are Gaussian; all elements are from
$\mathcal{N}(0,\sigma_b^2)$ where the variance scales the noise as desired.

We applied both the \BA\ and the \SBA\
with $B = \UT$ to these problems.
For the \BA\ we use the relaxation parameter
$\omega = 1.9 \, / \, \| \UT A \|$ and for the \SBA\
we use $\omega$ equal to the upper bound in \eqref{eq:SBAcc} with the factor 2
replaced by 1.9.
For both systems we used \texttt{eig} to compute the eigenvalues;
the spectral radius is $\rho(BA) = 1.00$ by construction,
and the leftmost eigenvalues are
\begin{align*}
	\lambda_{\mathrm{lm}}(\UT_{\mathrm{well}} \, A_{\mathrm{well}}) &=
	4.78 \cdot 10^{-5} \ , \\
	\lambda_{\mathrm{lm}}(\UT_{\mathrm{ill}} \, A_{\mathrm{ill}}) &=
	- 2.02 \cdot 10^{-5} \pm i\ 3.83 \cdot 10^{-6} \ .
\end{align*}
Note that for $A_{\mathrm{ill}}$ the leftmost eigenvalue is a complex
conjugate pair with a negative real part.
Hence we expect the \BA\ to exhibit non-convergence for the
problems with $A_{\mathrm{ill}}$.
To ensure convergence of the \SBA\ we choose
\[
\alpha = 2\ | \Re ( \lambda_{\mathrm{lm}}(\UT A) ) | \ .
\]
\begin{figure}
	\begin{center}
		\includegraphics[width=0.48\textwidth]{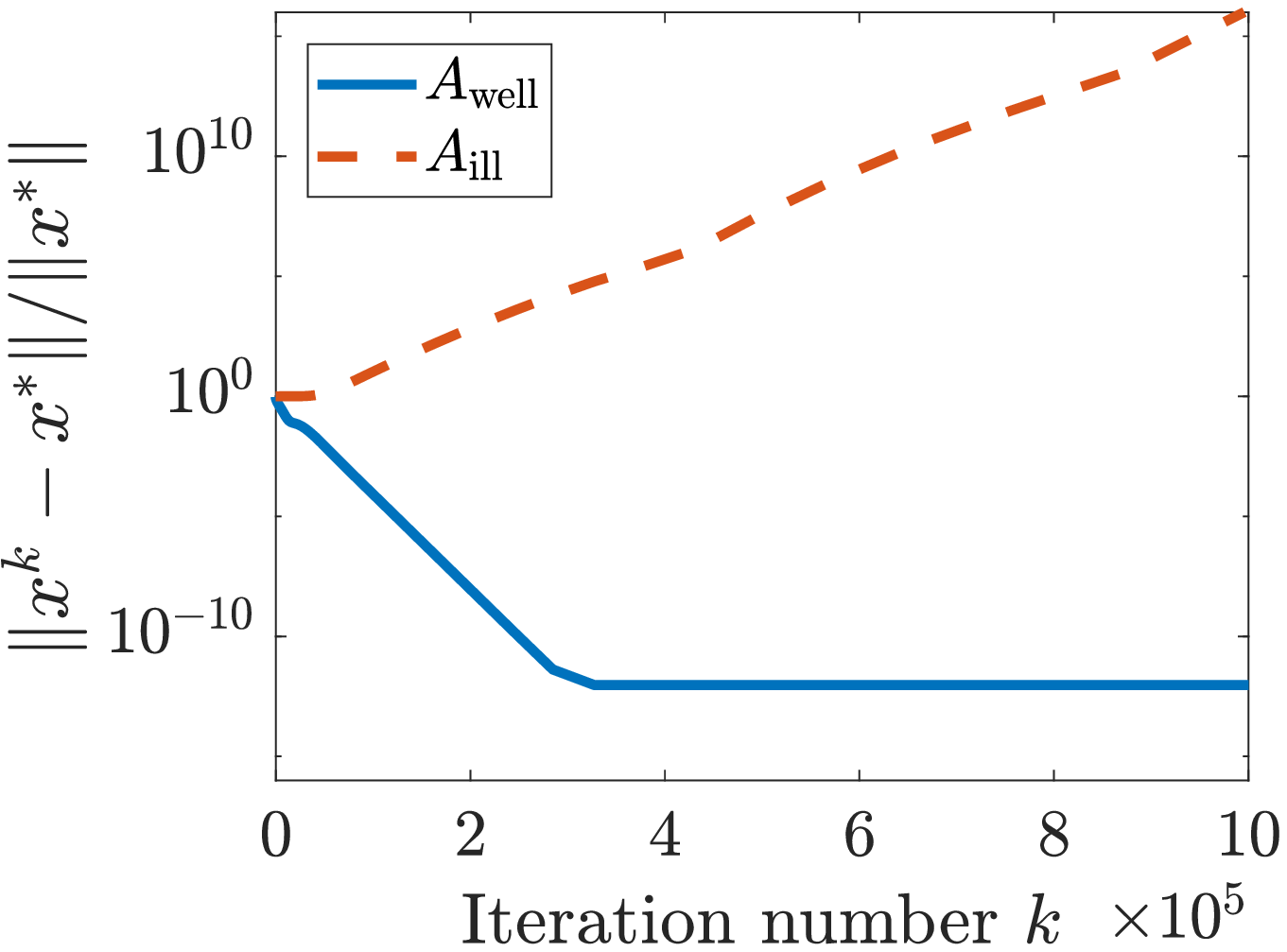}\quad
		\includegraphics[width=0.48\textwidth]{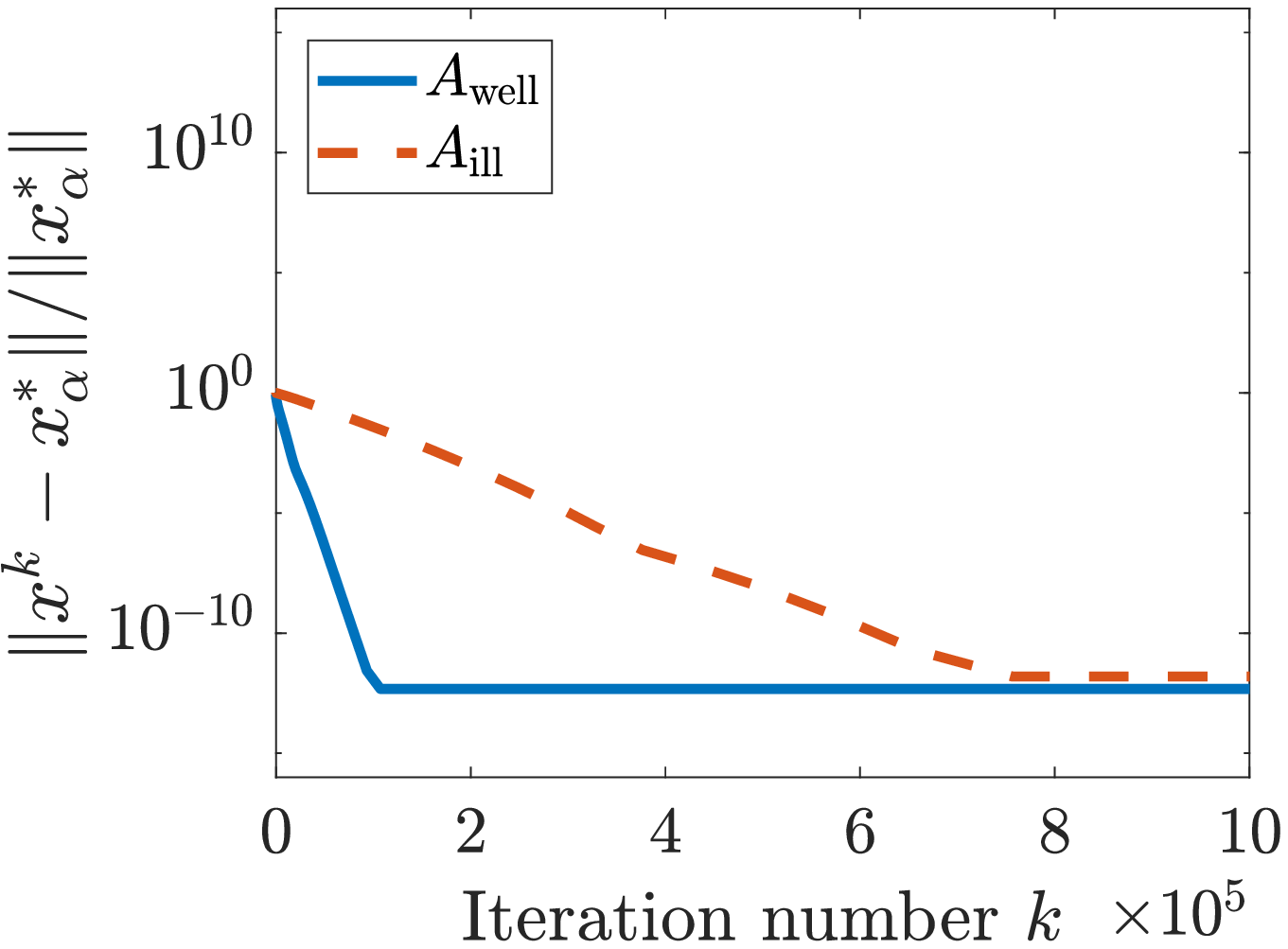}
	\end{center}
	\caption{\label{fig:smallconv} The convergence histories for the small
		test problems with noisy data ($e\neq 0$); the similar plots for noise-free
		data ($e=0$) are almost identical.
		Left:\ results for the \BA\ which exhibits non-convergence
		for the ill-conditioned matrix.
		Right:\ results for the \SBA\ with
		$\alpha =2\, | \Re ( \lambda_{\mathrm{lm}}(\UT A) ) |$; this method
		converges for both matrices.}
\end{figure}
The convergence histories for the noisy data ($e\neq 0$) are shown in
Figure~\ref{fig:smallconv}; the similar plots for the noise-free data ($e=0$)
are almost identical.
We make the following observations:
\begin{itemize}
	\item
	The left plot confirms that the \BA\ converges only for
	the well-conditioned matrix for which all eigenvalues have a positive real part,
	and that it converges to
	$x^* = A_{\mathcal{N}(B)}^{\dagger} b = B\, (AB)^{-1} b$.
	\item
	The right plot confirms that the \SBA\ converges for
	both matrices, and that it converges to
	$x_{\alpha}^* = B\, (AB+\alpha I)^{-1} b$.
    For the well-conditioned system the convergence is much faster
    compared to the ill-conditioned system.  Also,
    the \SBA\ converges faster than the \BA\ for the well-conditioned system.
\end{itemize}

\begin{figure}
	\begin{center}
		\includegraphics[width=0.48\textwidth]{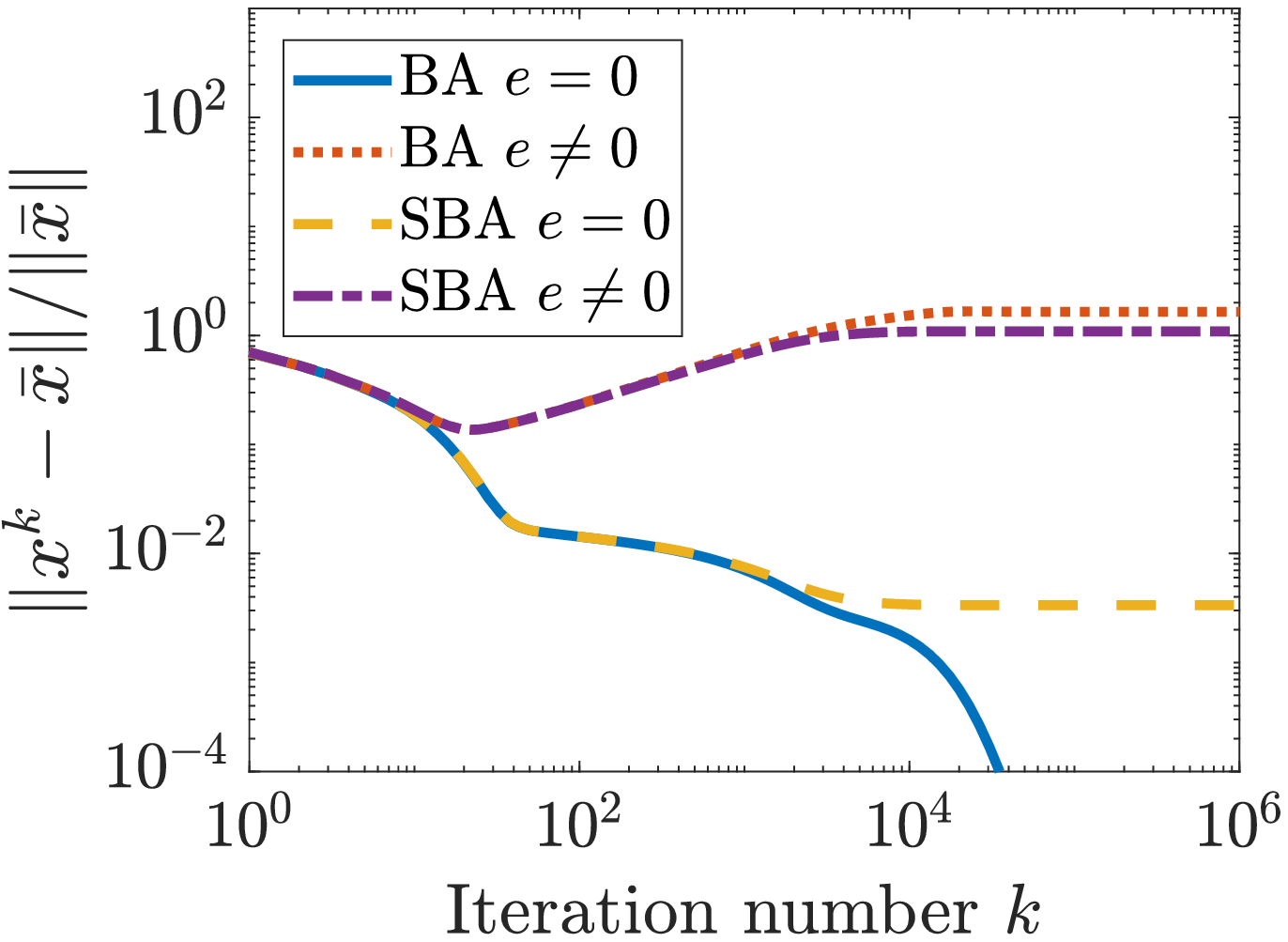}\quad
		\includegraphics[width=0.48\textwidth]{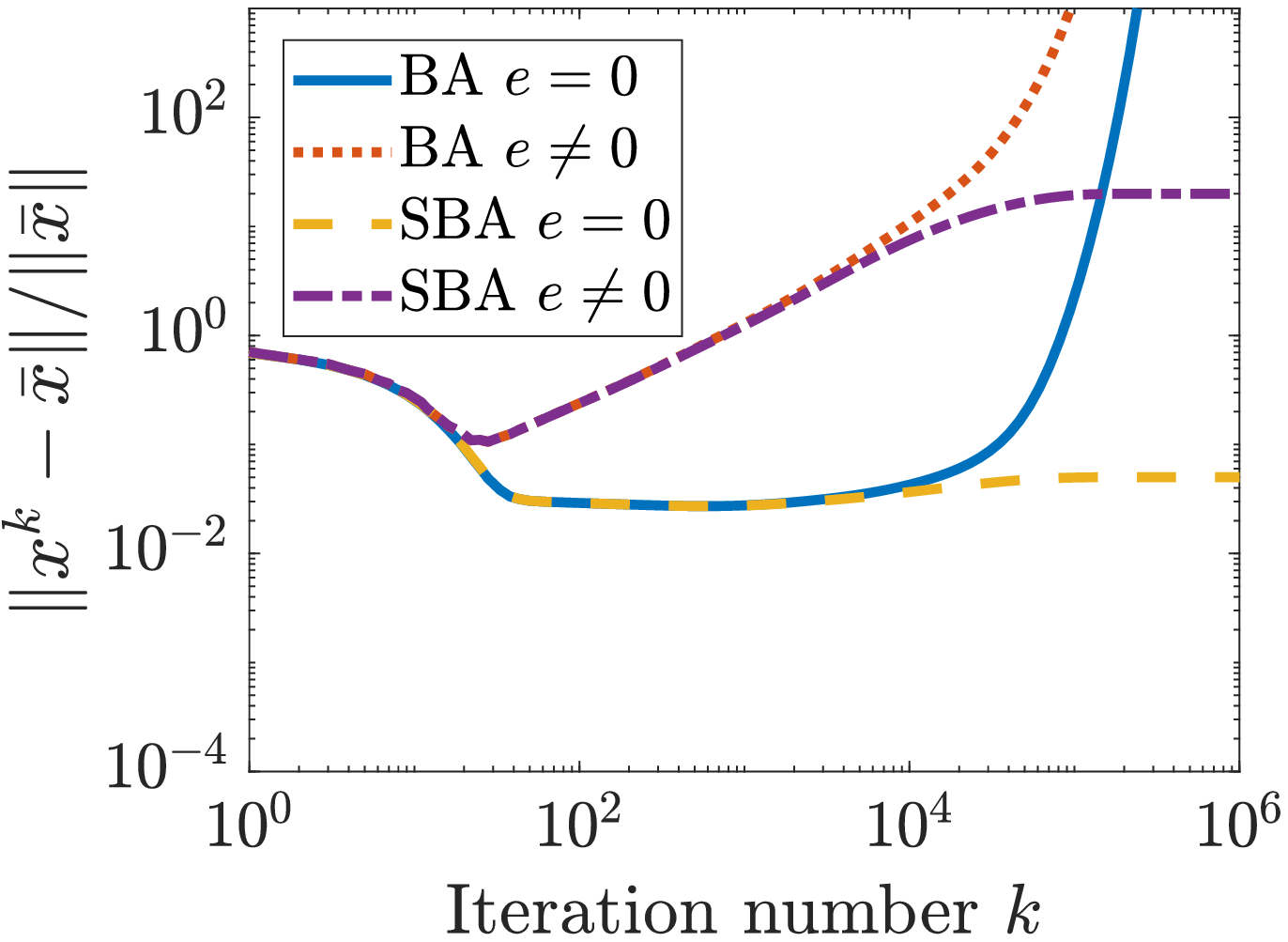}
	\end{center}
	\caption{\label{fig:smallsemiconv} The semi-convergence histories of the
		two iterative methods for $A_{\mathrm{well}}$ (left) and
		$A_{\mathrm{ill}}$ (right).
		For the problems with noisy data, we obtain a reasonably accurate
		solution at the point of semi-convergence.  See the text for a
		detailed discussion.}
\end{figure}

Having confirmed the convergence of the methods, it is also relevant to
study how well the methods are able to approximate the exact solution~$\bar{x}$.
To illustrate this, Figure~\ref{fig:smallsemiconv} shows plots of the
reconstruction errors $\| x^k - \bar{x}\| \, / \, \| \bar{x} \|$ versus iteration number $k$.
We make several observations:
\begin{itemize}
	\item
	For the well-conditioned matrix $A_{\mathrm{well}}$ and noise-free data ($e=0$)
	the \BA\ converges to the exact solution $\bar{x}$
	as predicted by Theorem~\ref{prop:fixpoint} with square and full-rank $A$ and $B$.
	\item
	For the ill-conditioned matrix $A_{\mathrm{ill}}$ the \BA\
	always diverges.
	\item
	For noise-free data ($e=0$)
	the \SBA\ converges to a slightly perturbed solution
	$\bar{x}_\alpha^\ast$ with
	\begin{align*}
		\| \bar{x}_\alpha^\ast - \bar{x} \| \, / \, \| \bar{x} \| &= 3.36 \cdot 10^{-3}
		\quad \hbox{for} \quad A_{\mathrm{well}} \ , \\[1mm]
		\| \bar{x}_\alpha^\ast - \bar{x} \| \, / \, \| \bar{x} \| & = 5.01 \cdot 10^{-2}
		\quad \hbox{for} \quad A_{\mathrm{ill}} \ .
	\end{align*}
	\item
	For noisy data ($e\neq 0$) the \BA\ for $A_{\mathrm{well}}$,
	as well as the \SBA\ for both $A_{\mathrm{well}}$
	and  $A_{\mathrm{ill}}$, converge to a fixed point that is quite far from
	the exact solution.
	Exactly the same behavior occurs for iterations that use a matching transpose.
	The main point is that for noisy data the iterations exhibit
	\textsl{semi-convergence}, where the iterates produce a good approximation to $\bar{x}$
	during the initial iterations.
\end{itemize}
In conclusion, these experiments verify the benefit of using the
\SBA, namely, guaranteed convergence while
retaining the semi-convergence that all algebraic iterative methods
rely on for noisy data.

\subsection{Test Problem From the ASTRA Toolbox}
\label{sec:TestASTRA}

The second test problem comes from X-ray computed tomography (CT) using a
parallel-beam geometry with 90 projections in the angular range
$0^{\circ}$--$180^{\circ}$, and a detector with 80 pixels and of
length is equal to the image size.
The exact solution $\bar{x}$ represents a $128 \times 128$ discretization
of the Shepp--Logan phantom.
Hence the problem size is $m=7200$ and $n=16384$, and the problem is underdetermined.
For such underdetermined CT problems the algebraic iterative methods
can give much better results than the ``standard'' methods based
on filtered backprojection \cite{NFD11}.
As before, the exact and noisy data are generated according to \eqref{eq:bb}.

The matrices $A$ and $\UT$ that represent the forward and backprojections
come from the ASTRA toolbox \cite{ASTRA} used in conjunction with
``spot operators'' \cite{SPOT}.
Specifically, we use the ASTRA function \texttt{opTomo} to compute the matrix-vector
multiplications with these matrices.
For the forward projection the GPU-version of ASTRA uses the interpolation model,
also known as Joseph's method \cite{Joseph}, while the backprojection
uses the line model with linear interpolation between detector pixels
(as done, e.g., in MATLAB's \texttt{iradon}).
The matrices are not stored;
if we store them then the sparsity of $A$ and $\UT$ is 1.32\% and 2.35\%,
respectively.
Measures of nonsymmetry and nonnormality of $BA$ are
  \[
    \frac{\| \frac{1}{2} \bigl( (BA) - (BA)^T \bigr) \|_{\mathrm{F}}}%
    {\| BA \|_{\mathrm{F}}} = 0.125 \ , \qquad
    \frac{\| (BA)\, (BA)^T - (BA)^T (BA) \|_{\mathrm{F}}}%
    {\| BA \|_{\mathrm{F}}^2} = 0.0235 \ .
  \]
The spectral radius is $\rho(BA) \approx 1.76 \cdot 10^4$.


For this test problem we use the algorithms from
Section \ref{sec:eigenvalues} to estimate the leftmost eigenvalue of $BA$,
and we compare the performance of the following strategies:
\begin{itemize}
\item \texttt{eigs} from MATLAB with options
  \texttt{maxit} = 1500, \texttt{tol} = $10^{-2}$, \texttt{SIGMA} = \texttt{'sr'};
\item \texttt{ks} is the Krylov--Schur method (Algorithm 1) with options
  \texttt{maxit} = 1500, absolute tolerance \texttt{tol} = $10^{-2}$,
  \texttt{mindim} = 30, \texttt{maxdim} = 60,
  \texttt{target} = \texttt{'-inf'} (for the leftmost eigenvalue);
\item \texttt{jd} is the Jacobi--Davidson method with options
  \texttt{maxit} = 1500, \texttt{tol} = $10^{-2}$, \texttt{mindim} = 30,
  \texttt{maxdim} = 60, \texttt{target} = \texttt{'-inf'};
\item \texttt{fovN} is the field of values based method (Algorithm 2) with options
  \texttt{maxit} = \texttt{N}, \texttt{mindim} = 30, \texttt{maxdim} = 60.
\end{itemize}
Table~\ref{tbl:eigenvalues} shows results for two cases:
\begin{enumerate}
\item
All matrix-vector multiplications are performed on the GPU, using the
ASTRA function {\tt opTomo}.
These multiplications are performed in \textsl{single precision}.
The Jacobi--Davidson method {\tt jd} did not converge, and this may be due to
the single-precision computations on the GPU.
\item
The two matrices $A$ and $\UT$ are explicitly computed and stored
as sparse matrices.  Like all the other computations, these computations
are performed on the CPU in \textsl{double precision}.
\end{enumerate}

\begin{table}
\caption{\label{tbl:eigenvalues} Estimation of the leftmost eigenvalue
$\lambda_{\mathrm{lm}}$ of $BA$ with the methods discussed in
Section~\ref{sec:eigenvalues}.
We use the ASTRA test problem mentioned in the text.
The methods stop when we reach convergence with absolute tolerance $10^{-2}$
or after the fixed number of iterations used in the field of values based method
Algorithm~2 {\tt fovN} for {\tt N} = 10, 15, and 20 iterations, respectively.
We show the mean number of matrix-vector multiplications (MVMs) as well as
the mean and standard deviation of the estimated $\lambda_{\mathrm{lm}}$.
The top half of the table shows results for the case when the matrix-vector
multiplications are done by the ASTRA functon {\tt opTomo}, while the
bottom half is for the case when we explicitly store the matrices.
The difference is due to the difference in precision (single vs.~double).
}
\begin{center}
\setlength{\tabcolsep}{5pt}
\renewcommand{\arraystretch}{1.1}
\begin{tabular}{l|cc} \hline
  & \multicolumn{2}{c}{25 trials with {\tt opTomo} that
          utilizes the GPU} \\
		& Mean MVM & Mean $\lambda_{\mathrm{lm}}$ (st.~dev.)  \\ \hline \rule{0pt}{2.3ex}%
		\texttt{fov10} & \phantom{1}660 & $-0.8921$ \ ($1.167 \cdot 10^{-1}$) \\
		\texttt{fov15} & \phantom{1}960 & $-0.9323$ \ ($7.478 \cdot 10^{-3}$) \\
		\texttt{fov20} &           1260 & $-0.9354$ \ ($3.579 \cdot 10^{-3}$) \\
		\texttt{ks}    &           1041 & $-0.9281$ \ ($2.510 \cdot 10^{-5}$) \\
		\texttt{eigs}  &           1278 & $-0.9281$ \ ($4.103 \cdot 10^{-5}$) \\ \hline
	\hline \rule{0pt}{2.8ex}%
		& \multicolumn{2}{c}{25 trials with $A$ and $\UT$ explicitly stored} \\
		& Mean MVM & Mean $\lambda_{\mathrm{lm}}$ (st.~dev.)  \\ \hline \rule{0pt}{2.3ex}%
		\texttt{fov10} & \phantom{1}660 & $-0.8920$ \ ($1.171 \cdot 10^{-1}$) \\
		\texttt{fov15} & \phantom{1}960 & $-0.9322$ \ ($7.647 \cdot 10^{-3}$) \\
		\texttt{fov20} &           1260 & $-0.9354$ \ ($3.581 \cdot 10^{-3}$) \\
		\texttt{ks}    &           1039 & $-0.9281$ \ ($2.334 \cdot 10^{-5}$) \\
		\texttt{eigs}  &           1261 & $-0.9281$ \ ($3.893 \cdot 10^{-5}$) \\
		\texttt{jd}    &           1404 & $-0.9281$ \ ($4.624 \cdot 10^{-5}$) \\ \hline
		\end{tabular}
\end{center}
\end{table}

\noindent
We see that for these CT problems the Krylov--Schur method
uses the least amount of computations, corresponding to the work in
performing about 500 iterations of the \SBA\ method.
When solving several CT problems with the same geometry, and hence the
same matrices, this is acceptable.
Even when an unmatched pair is used only once, this may be an acceptable
overhead to ensure convergence and trust in the computed solution.

\begin{figure}
	\begin{center}
		\includegraphics[width=0.55\textwidth]{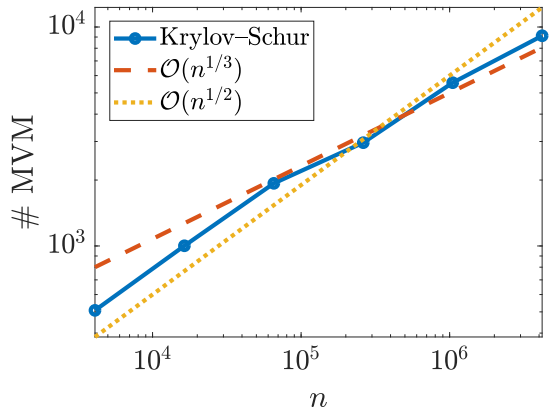}
	\end{center}
	\caption{\label{fig:ASTRAMVM} The number of matrix-vector multiplications
    (MVM) needed to estimate the leftmost eigenvalue with the Krylov-Schur method,
    as a function of the number of image pixels $n$.}
\end{figure}

Figure~\ref{fig:ASTRAMVM} reports the work involved in the eigenvalue estimation
with the Krylov-Schur method,
as measured by matrix-vector multiplications (MVMs), for different numbers
of image pixels $n = 64^2, 128^2, 256^2, 512^2, 1024^2, 2048^2$.
The number of rows is $m \approx 0.45\, n$.
We see that for larger problems the number of MVMs seems to be
proportional to $n^{1/3}$.

\begin{figure}
	\begin{center}
		\includegraphics[width=0.48\textwidth]{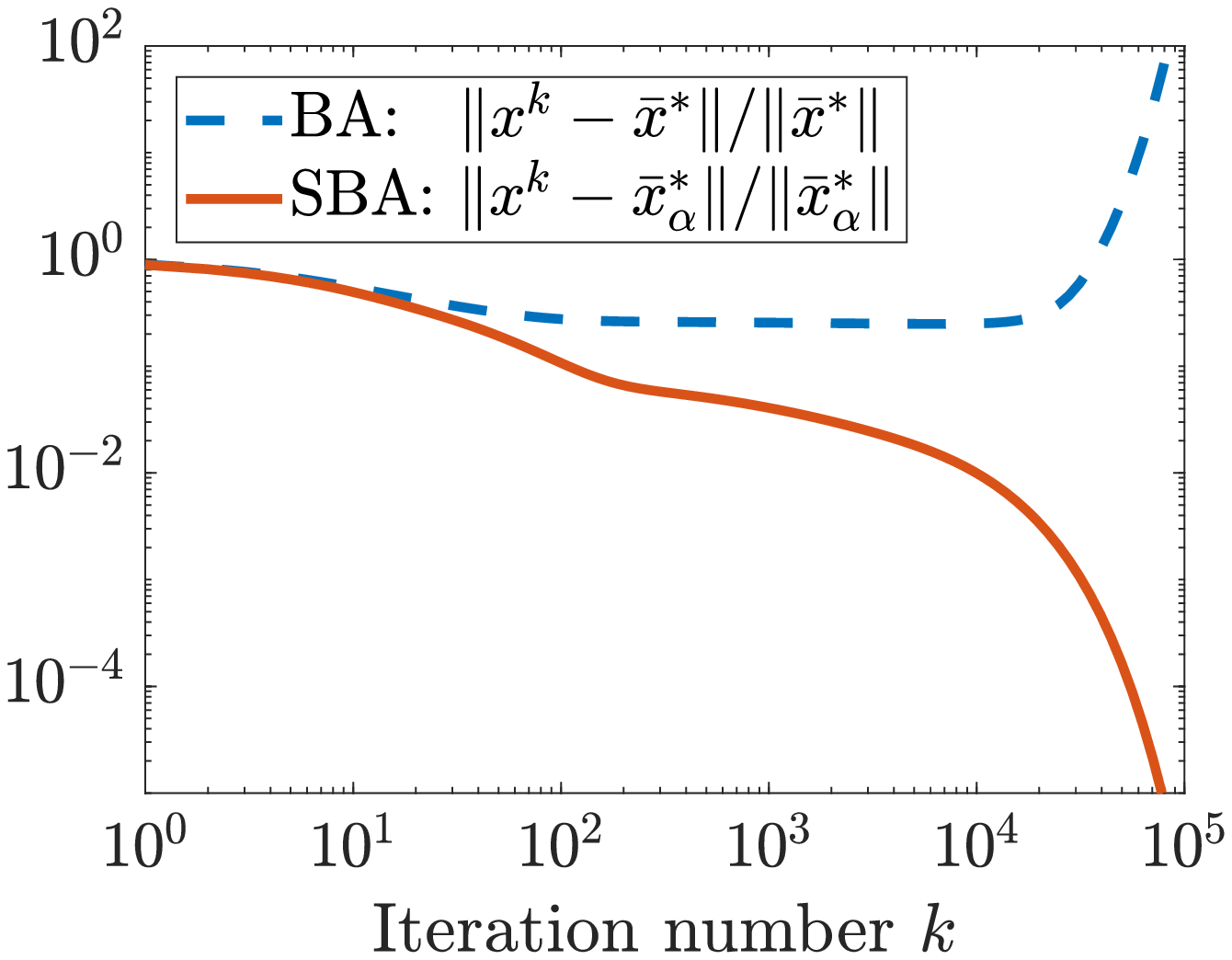}\quad
		\includegraphics[width=0.48\textwidth]{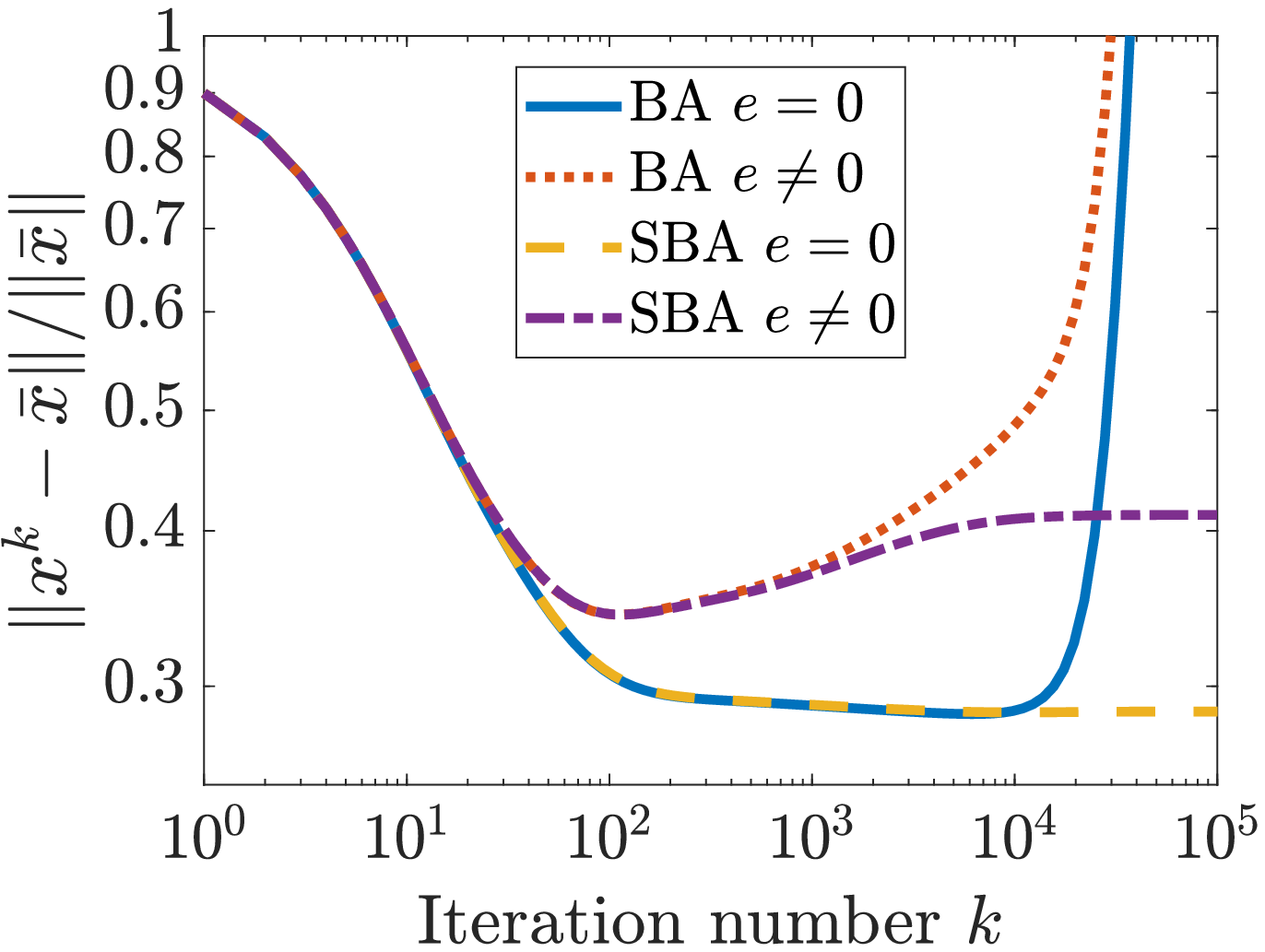}
	\end{center}
	\caption{\label{fig:ASTRA} Left:\ the convergence of the
		\BA\ and the \SBA\ for
		noise-free data ($e=0$); as expected the former does not converge.
	    Right:\ semi-convergence histories of the two iterative methods for
		noise-free as well as noisy data.}
\end{figure}

Figure~\ref{fig:ASTRA} shows the convergence histories when applying
the two iterative algorithms to this problem, with
$\alpha$ and $\omega$ chosen as before.
We observe the same behavior as before:
The \BA\ diverges, while the \SBA\
converges to a fixed point.
Moreover, the \SBA\ exhibits semi-convergence
as expected, and the minimum reconstruction error\,---\,at the point
of semi-convergence\,---\,does not deteriorate when using the shifted method.

\section{Conclusions}
\label{sec:conclusions}

We have considered algebraic iterative reconstruction methods with an
unmatched backprojector, i.e., the backprojector is not the exact adjoint
or transpose of the forward projector.
In particular we are concerned with the common situation where the iterative
method does not converge, due to the nonsymmetry of the iteration matrix.
We propose a modified algorithm that uses a small shift parameter,
we define the conditions that guarantee convergence to a
fixed point of a slightly perturbed problem,
and we give perturbation bounds for this fixed point.
We also discuss how to efficiently estimate the leftmost eigenvalue of a certain
matrix, which is needed to computed the shift parameter in the modified algorithm.
Numerical experiments with artificial test problems as well as a test
problem from computed tomography illustrate the use of the new algorithm.
Our MATLAB code is available on request.

\section*{Acknowledgements}
We thank Willem Jan Palenstijn (CWI) for sharing his insight into the
ASTRA package.
We acknowledge the inspiration from Tommy Elfving and Martin S.~Andersen who,
independently, suggested the shift as a remedy for the nonconvergence.
Finally, we thank two anonymous referees for many valuable comments that
helped to improve the paper.

\bibliographystyle{siamplain}

\end{document}